\newtheorem{theoremletters}{Theorem}
\newtheorem{corollaryletters}[theoremletters]{Corollary}
\newtheorem{lemma}{Lemma}[section]
\newtheorem{theorem}[lemma]{Theorem}
\newtheorem{proposition}[lemma]{Proposition}
\newtheorem{corollary}[lemma]{Corollary}
\renewenvironment{proof}[1][\proofname]{{\sc #1. }}{\qed}
\theoremstyle{definition}
\newtheorem{example}[lemma]{Example}
\newtheorem{remark}[lemma]{Remark}
\newcommand{\abs}[1]{\ensuremath{\left| #1 \right|}}
\newcommand{\op}{\operatorname}
\newcommand{\ce}[2]{\operatorname{C}_{#1}(#2)}
\newcommand{\ze}[1]{\operatorname{Z}(#1)}
\newcommand{\fra}[1]{\op{\Phi}(#1)}
\newcommand{\fit}[1]{\operatorname{F}(#1)}
\begin{document}

\title{\bf Square-free class sizes in products of groups}

\author{\sc M. J. Felipe $\cdot$ A. Mart\'inez-Pastor $\cdot$ V. M. Ortiz-Sotomayor
\thanks{The first author is supported by Proyecto Prometeo II/2015/011, Generalitat Valenciana (Spain), and the second author is supported by Proyecto MTM2014-54707-C3-1-P, Ministerio de Econom\'ia, Industria y Competitividad (Spain). The results in this paper are part of the third author's Ph.D. thesis, and he acknowledges the predoctoral grant ACIF/2016/170, Generalitat Valenciana (Spain). The authors wish to thank John Cossey for helpful conversations during his last visit to Valencia. \newline
\rule{6cm}{0.1mm}\newline
Instituto Universitario de Matem\'atica Pura y Aplicada (IUMPA), Universitat Polit\`ecnica de Val\`encia, Camino de Vera, s/n, 46022, Valencia, Spain. \newline
\Letter: \texttt{mfelipe@mat.upv.es}, \texttt{anamarti@mat.upv.es}, \texttt{vicorso@doctor.upv.es}
}}

\date{}

\maketitle

\begin{abstract}
\noindent We obtain some structural properties of a factorised group $G=AB$, given that the conjugacy class sizes of certain elements in $A \cup B$ are not divisible by $p^2$, for some prime $p$. The case when $G=AB$ is a mutually permutable product is especially considered. \\

\noindent \textbf{Keywords} Finite groups $\cdot$ Soluble groups $\cdot$ Products of subgroups $\cdot$ Conjugacy classes

\smallskip

\noindent \textbf{2010 MSC} 20D10 $\cdot$ 20D40 $\cdot$ 20E45
\end{abstract}

%%%%%%%%%%%%%%%%%%%%%%%%%%%%%%%%%%%%%%%%%%%%%%%%%%%%%%%%%%%%%%%%%%%%%%%%%%%%%%%%%%%%%%%%%%%%%%%%

\section{Introduction} 

All groups considered thorogouth this paper are finite. Over the last years, many authors have investigated the influence of conjugacy class sizes on the structure of finite groups. In the meantime, numerous studies in the framework of group theory have focused in factorised groups. In this setting, a central question is how the structure of the factors affects the structure of the whole group, in particular when they are connected by certain permutability properties. The purpose of this paper is to show new achievements which combine both current perspectives in finite groups. More precisely, our aim is to get some information about a factorised group, provided that the conjugacy class sizes of some elements of its factors are square-free.

The earlier starting point of our investigation can be traced back to the paper of Chillag and Herzog (\cite{CH}), where the structure of a group in which all elements have square-free conjugacy class sizes was first analysed. Next, in \cite{CW}, Cossey and Wang localised one of the main theorems in \cite{CH} for a fixed prime $p$, that is, they considered conjugacy class sizes not divisible by $p^2$, for certain prime $p$. Later on, this study was improved by  Li in \cite{LI}, and by Liu, Wang, and Wei in \cite{LWW}, by replacing conditions on all conjugacy classes by those referring only to conjugacy classes of either $p$-regular elements or prime power order elements, using the classification theorem of finite simple groups (CFSG). These authors also first obtained some preliminary results in factorised groups.  This research was extended in 2012 by  Ballester-Bolinches, Cossey and Li in \cite{BCL}, through mutually permutable products. More recently, in 2014, Qian and Wang (\cite{QW}) have gone a step further by considering just conjugacy class sizes of $p$-regular elements of prime power order (although not in factorised groups).

In the context of factorised groups, and aiming to obtain criteria for products of supersoluble subgroups to be supersoluble, several authors have considered products in which certain subgroups of the factors permute  (see \cite{BEA} for a detailed account). In this scene, we are interested in \emph{mutually permutable products}, factorised groups $G = AB$ such that the subgroups $A$ and $B$ are mutually permutable, i.e.,  $A $ permutes with every subgroup of $B$ and $B$ permutes with every subgroup of $A$ (see also \cite{BH}). Obviously, if $A$ and $B$ are normal in $G$, then they are mutually permutable.

We recall that, for a group $G$, the set $x^G = \lbrace g^{-1}xg \: : \: g\in G\rbrace$ is  the \emph{conjugacy class} of the element $x \in G$, and $\abs{x^{G}}$ denotes the \emph{conjugacy class size} of $x$. If $p$ is a prime number, we say that  $x \in G$ is a $p$-\emph{regular} element if its order is not divisible by $p$, and that it is a $p$\emph{-element} if its order is a power of $p$. Moreover,  if $n$ is an integer,  let $n_p$ denote the highest power of $p$ dividing $n$. The $m$th group of order $n$ in the \verb+SmallGroups+ library \cite{GAP} of GAP will be identified by $n\#m$. The remainder notation is standard and is taken mainly from \cite{DH}. We also refer to this book for details about classes of groups.

In this paper, motivated by the above development, at first we focus on the case of $p$-groups,  extending for factorised groups the well-known Knoche's theorem (see \cite{KNO}).

\begin{theoremletters}
Let $p$ be a prime number and let $P=AB$ be a $p$-group such that $p^2$ does not divide $\abs{x^P}$ for all $x\in A\cup B$. Then $P' \leqslant \fra{P} \leqslant \ze{P}$, $P'$ is elementary abelian and $\abs{P'}\leq p^2$.
\label{knoche}
\end{theoremletters}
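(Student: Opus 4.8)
The plan is to combine Knoche's theorem applied to each factor with a short observation on centralisers of index $p$, and then an induction on $\abs{P}$.

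\smallskip
\noindent\emph{Local structure of the factors.} For $a\in A$ one has $\abs{a^{A}}=\abs{A:\ce{A}{a}}\le\abs{P:\ce{P}{a}}=\abs{a^{P}}\le p$, so the hypothesis of Knoche's theorem holds for $A$, and symmetrically for $B$; thus $A'$ and $B'$ are elementary abelian of order at most $p$, with $A'\le\fra{A}\le\ze{A}$ and $B'\le\fra{B}\le\ze{B}$. Next, for every $x\in A\cup B$ the centraliser $\ce{P}{x}$ has index $1$ or $p$ in the $p$-group $P$, hence is normal and $P/\ce{P}{x}$ is cyclic; therefore $[\langle x\rangle,P]\le\ce{P}{x}$, that is, $x$ centralises the normal subgroup $[\langle x\rangle,P]$. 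Applying this with $x=a\in A$ and with $x=b\in B$ to the element $[a,b]\in[\langle a\rangle,P]\cap[\langle b\rangle,P]$ shows that $[a,b]$ commutes with both $a$ and $b$; consequently $\langle a,b\rangle$ has class at most $2$, $\langle a,b\rangle'=\langle[a,b]\rangle$ is cyclic, and $\abs{a^{\langle a,b\rangle}}=\operatorname{ord}([a,b])\le p$, so $[a,b]^{p}=1$. The same reasoning gives $[a_{1},a_{2}]^{p}=[b_{1},b_{2}]^{p}=1$, so all ``generating'' commutators have order dividing $p$; in particular, once $P$ is shown to have class $2$, the group $P'$ is automatically elementary abelian, since then $P'=\langle[x,y]:x,y\in A\cup B\rangle$ is abelian and generated by elements of order $\le p$.

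\smallskip
\noindent\emph{Induction on $\abs{P}$.} The abelian case is trivial. Let $N$ be a central subgroup of $P$ of order $p$. The hypothesis is inherited by $\overline P:=P/N=\overline A\,\overline B$ (for $a\in A$ we have $\ce{\overline P}{\overline a}\ge\ce{P}{a}N/N$, whence $\abs{\overline a^{\overline P}}\le\abs{a^{P}}\le p$, and likewise for $b$), so by induction $P'N/N$ is elementary abelian, contained in $\fra{\overline P}\le\ze{\overline P}$, of order at most $p^{2}$. If $N\not\le P'$ then $P'\cong P'N/N$ is elementary abelian of order $\le p^{2}$, $[P',P]\le P'\cap N=1$ gives $P'\le\ze{P}$, and then $\fra{P}=P'P^{p}\le\ze{P}$ since $[x^{p},y]=[x,y]^{p}=1$ for all $x,y$; so we are done in this case. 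Hence we may assume every central subgroup of order $p$ of $P$ lies in $P'$; and if $\ze{P}$ is non-cyclic, applying the previous observation to two distinct such subgroups forces $(P')^{p}=[P',P']=[P',P]=1$, leaving only the bound $\abs{P'}\le p^{2}$ to be re-established.

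\smallskip
\noindent\emph{The remaining (hard) part.} What is left is the case $\ze{P}$ cyclic, together with the residual order bound in the non-cyclic case; this is where I expect the genuine obstacle. The plan is to feed back the rigidity obtained in the first step: since each $[a,b]$ commutes with $a$ and $b$, the commutator is ``rank one along each factor'' --- the maps $b\mapsto[a,b]$ and $a\mapsto[a,b]$ have images of order at most $\abs{P:\ce{P}{a}}$, $\abs{P:\ce{P}{b}}\le p$ --- and, since $A\fra{P}/\fra{P}$ and $B\fra{P}/\fra{P}$ together span $P/\fra{P}$, this should force $[P',P]=1$ and pin $P'$ down to be spanned, as an $\mathbb F_{p}$-space, by the (at most one-dimensional) contributions of $A'$, $B'$ and $[A,B]$ arranged so that their sum has dimension at most two; the exponent and Frattini assertions then follow as before. (When $A$ and $B$ are abelian this is essentially It\^o's theorem refined by the order-$p$ information, and one expects $\abs{P'}\le p$ there.) Making this counting rigorous --- in particular controlling how the subgroup $[A,B]$ interacts with $A'B'$, and disposing of the cyclic-centre case --- is, I believe, the crux of the proof.
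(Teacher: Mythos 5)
Your opening step is correct and coincides with the paper's: $\ce{P}{x}$ has index at most $p$, hence contains $\fra{P}$, and since $A\cup B$ generates $P$ this yields $P'\leqslant\fra{P}\leqslant\ze{P}$; then $x^{p}\in\ze{P}$ gives $[x,y]^{p}=[x^{p},y]=1$, so $P'$ is elementary abelian. The genuine gap is the bound $\abs{P'}\leq p^{2}$, which is the actual content of the theorem and which you never establish. Your induction on $\abs{P}$ only proves $\abs{P'N/N}\leq p^{2}$ for each central subgroup $N$ of order $p$; in the case you cannot avoid (every such $N$ contained in $P'$), intersecting two of these quotient bounds gives at best $\abs{P'}\leq p^{3}$, and you explicitly defer the cyclic-centre case and the residual order bound to a ``remaining (hard) part'' that is only a heuristic. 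The phrase ``rank one along each factor \dots should force'' is not an argument: it does not control why $[P,A]$ and $[P,B]$ are each \emph{one}-dimensional as $\mathbb{F}_{p}$-spaces rather than merely generated by one-dimensional images of the maps $b\mapsto[a,b]$ as $a$ varies, which is precisely the point at issue.

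For comparison, the paper closes exactly this gap. Since $P'\leqslant\ze{P}$, writing $y=y_{a}y_{b}$ with $y_{a}\in A$, $y_{b}\in B$ gives $[x,y]=[x,y_{b}][x,y_{a}]$ and hence $P'=[P,A][P,B]$; it then suffices to show each factor has order at most $p$. For $[P,B]$ this is done by proving that any two nontrivial commutators $[x,z]$ and $[x',z']$ with $z,z'\in B$ generate the \emph{same} subgroup of order $p$: when $z,z'\notin\ze{B}$ one picks $b\in B\setminus(\ce{P}{z}\cup\ce{P}{z'})$ (possible since a $p$-group is not the union of two proper subgroups), uses $P=\ce{P}{z}\langle x\rangle=\ce{P}{b}\langle z\rangle=\ce{P}{z'}\langle x'\rangle$ to compute $[b,z']$ simultaneously as a nontrivial power of $[x,z]$ and of $[x',z']$, and a short case analysis (replacing $z'$ by $zz'$, or $b$ by the $A$-part of a suitable $w$) handles central $z$ or $z'$. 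This comparison of all commutators against a single auxiliary element is the idea your sketch is missing; without it, or something equivalent, the $p^{2}$ bound does not follow.
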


Our next goal in the paper is to prove the following theorem, regarding mutually permutable products.

\begin{theoremletters}
\label{teoA}
Let $G = AB$ be the mutually permutable product of the subgroups $A$ and $B$, and let $p$ be a prime such that $\operatorname{gcd}(p - 1, \abs{G}) = 1$. If $p^{2}$ does not divide $\abs{x^{G}}$ for any $p$-regular element $x\in A\cup B$ of prime power order, then:

\emph{(1)} $G$ is soluble.
	
\emph{(2)} $G$ is $p$-nilpotent.
	
\emph{(3)} The Sylow $p$-subgroups of $G/\op{O}_{p}(G)$ are elementary abelian.
\end{theoremletters}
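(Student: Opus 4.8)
\emph{Proof plan.} The plan is to induct on $\abs G$ and to prove the three assertions in the order listed, each time reducing to a configuration governed by a coprime action. Throughout I will use that homomorphic images of mutually permutable products are mutually permutable, that $\abs{(xN)^{G/N}}$ divides $\abs{x^G}$ for $N\trianglelefteq G$, and that $\abs{x^N}$ divides $\abs{x^G}$ when $x\in N\trianglelefteq G$; I will also use that the hypotheses pass to every quotient $G/N$---the $\op{gcd}$ condition trivially, and the class-size condition by lifting a $p$-regular element $\overline x=\overline a$ (with $a\in A\cup B$) of prime-power, say $r$-power, order to the $r$-part of $a$, which again lies in $A\cup B$, is $p$-regular of prime-power order, and maps onto $\overline x$.

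For \emph{(1)} I would split on $p$. If $p$ is odd then $p-1$ is even, so $\op{gcd}(p-1,\abs G)=1$ forces $\abs G$ odd and $G$ is soluble by the Odd Order Theorem. If $p=2$, I would take a counterexample of least order: a minimal normal subgroup $N$ cannot be abelian (otherwise $G/N$ is soluble by induction and hence so is $G$), so $N$ is non-abelian, and by the structure theory of mutually permutable products (\cite{BEA,BCL}) $N$ is simple with $N\leqslant A$ or $N\leqslant B$; then every $2$-regular element of $N$ of prime-power order lies in $A\cup B$, so its class size in $N$ divides its class size in $G$ and is not divisible by $4$, contradicting the classification-based results of \cite{LWW,QW} on simple groups. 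Hence $G$ is soluble, which I assume from now on.

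For \emph{(2)} I would first reduce to $\op{O}_{p'}(G)=1$: otherwise $G/\op{O}_{p'}(G)$ has trivial $p'$-core and, by induction, is $p$-nilpotent, hence a $p$-group, so $G$ is $p$-nilpotent. With $\op{O}_{p'}(G)=1$ and $G$ soluble, $\fit G=\op{O}_p(G)$ is self-centralising. If some prime $q\neq p$ divided $\abs G$, it would divide $\abs A$ or $\abs B$, say $\abs A$, and I would pick $1\neq Q\in\op{Syl}_q(A)$, whose elements are $p$-regular of prime-power order and lie in $A\cup B$. For $y\in Q$, choosing $P\in\op{Syl}_p(G)$ with $\ce Py\in\op{Syl}_p(\ce Gy)$ and using $\op{O}_p(G)\leqslant P$ gives $\abs{\op{O}_p(G):\ce{\op{O}_p(G)}y}\leqslant\abs{P:\ce Py}=\abs{y^G}_p\leqslant p$; passing to $V=\op{O}_p(G)/\fra{\op{O}_p(G)}$, coprimality yields $\dim_{\mathbb F_p}[V,y]\leqslant1$, and then $\op{gcd}(q,p-1)=1$ forces $y$ to act trivially on this line, so $[V,y]=[V,y,y]=0$ and $y$ centralises $\op{O}_p(G)$. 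Thus $Q\leqslant\ce G{\op{O}_p(G)}\leqslant\op{O}_p(G)$, so $Q=1$, a contradiction. Hence $\abs G$ is a power of $p$ and $G$ is $p$-nilpotent.

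For \emph{(3)} I would reduce to $\op{O}_p(G)=1$ (otherwise the statement for $G/\op{O}_p(G)$ is exactly the inductive hypothesis applied to it). Then, by \emph{(1)}--\emph{(2)}, $G=K\rtimes P$ with $K=\op{O}_{p'}(G)$ and $P\in\op{Syl}_p(G)$, and $\ce G{\fit G}\leqslant\fit G\leqslant K$ forces $\ce PK=1$, i.e.\ $P$ acts faithfully on $K$. Since $G$ is soluble I may choose Hall $p'$-subgroups $H_A\leqslant A$, $H_B\leqslant B$ with $K=H_AH_B$, so that $K$ is generated by the prime-power order elements $x$ of $H_A\cup H_B$, each satisfying $\abs{x^G}_p\leqslant p$. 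Writing $\pi\colon G\to G/K\cong P$, this means $\pi(\ce Gx)$ has index $\leqslant p$ in the $p$-group $G/K$, hence contains $\fra{G/K}=\pi(\fra P)$; so $\fra P\leqslant\ce GxK$, and as $\fra P$ is a $p$-subgroup of the $p$-nilpotent group $\ce GxK$ (with $p$-complement $K$), it lies in a Sylow $p$-subgroup of $\ce GxK$ contained in $\ce Gx$ up to conjugacy by an element of $\ce GxK$, whence $\fra P\leqslant\ce G{x^k}$ for some $k\in K$. The crucial step is to upgrade this---``$\fra P$ centralises a $K$-conjugate of each generator''---to ``$\fra P$ centralises a generating set of $K$''; this is where the rigidity of the mutually permutable product is indispensable (via a compatible choice of $P,H_A,H_B$, in the spirit of \cite{BCL}; alternatively, by showing that the factors $A,B$ themselves satisfy the hypotheses of the non-factorised theorem \cite{QW}, through a lemma comparing $\abs{x^A}$ with $\abs{x^G}$, and then transferring the conclusion back along the factorisation). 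Once that is done, $\fra P\leqslant\ce PK=1$, so $P$ is elementary abelian, completing the proof. I expect this last upgrade to be the main obstacle.
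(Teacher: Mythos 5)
Your treatments of (1) and (2) are essentially sound. For (1) you follow the paper's reduction in spirit (a non-trivial normal subgroup inside a factor via $A_{G}B_{G}\neq 1$, plus the CFSG-based input of \cite{QW}); the only imprecision is that a non-abelian minimal normal subgroup is a direct product of isomorphic simple groups rather than simple, which does not harm the argument. Your proof of (2) is genuinely different from, and more elementary than, the paper's: after reducing to $\op{O}_{p'}(G)=1$, you let a $q$-element $y\in A\cup B$ act on $V=\op{O}_p(G)/\fra{\op{O}_p(G)}$, use $\abs{y^G}_p\leq p$ to get $\abs{[V,y]}\leq p$, and then $\op{gcd}(p-1,\abs{G})=1$ to force $[V,y]=[V,y,y]=1$, whence $y\in\ce{G}{\fit{G}}\leqslant\op{O}_p(G)$, a contradiction. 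The paper instead reduces to a unique minimal normal subgroup $N=\ce{G}{N}$, shrinks $G$ to $\langle x\rangle N$ using Gasch\"utz and the mutually permutable structure, and only then invokes $\operatorname{Aut}(N)\cong C_{p-1}$; your route avoids all of that and uses the factorisation only to know that each prime divisor of $\abs{G}$ divides $\abs{A}$ or $\abs{B}$.

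Part (3), however, has a genuine gap, which you yourself flag. After reducing to $\op{O}_p(G)=1$ and $G=K\rtimes P$ with $K=\op{O}_{p'}(G)$, you correctly obtain, for each prime power order generator $x$ of $K$ lying in $H_A\cup H_B$, that $\fra{P}\leqslant\ce{G}{x^{k}}$ for some $k=k(x)\in K$; but the conjugator depends on $x$, and nothing lets you choose the $k(x)$ coherently, so you cannot conclude $\fra{P}\leqslant\ce{P}{K}=1$. Neither of your proposed repairs works as stated. The second one --- transferring the hypotheses to the factors so as to apply \cite{QW} to $A$ and $B$ separately --- is refuted by Remark \ref{remark1}(b) of the paper: for $G=300\#25=AB$ with $A=D_{10}\times D_{10}$, $B=[C_5\times C_5]C_3$ and $p=2$, the hypotheses hold for $G$ yet there are $x\in A$ with $\abs{x^A}$ divisible by $4$, so no lemma bounding $\abs{x^A}$ in terms of $\abs{x^G}$ in the required sense can exist. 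The paper closes exactly this gap with Proposition \ref{proptec} and Theorem \ref{teoElementary}: with $\op{O}_p(G)=1$ it takes a minimal normal $q$-subgroup $N\leqslant A$, a Gasch\"utz complement $G=SN$, proves that $\op{O}_p(G/N)$ has order exactly $p$ and that the corresponding subgroup $T=\op{O}_p(S)$ satisfies $\ce{N}{T}=1$, and then splits $P=P_0\times T$ with $\abs{T}=p$ and $P_0\cong (PN/N)/\op{O}_{p}(G/N)$ elementary abelian by induction. An argument of this strength is what is missing from your sketch of (3).
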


In the particular case when $G=A=B$, we recover \cite[Theorem A]{QW} (see Section \ref{sec1}, Corollary \ref{CorTeoAQW}). We remark that both results use the CFSG. 

Moreover, in relation to the third assertion, we have found a gap in one of the statements in \cite[Theorem 1]{CW}, as it is reported in Remark \ref{remark1} (a).

On the other hand, we point out that it is possible to find examples of groups factorised as a product of two (mutually permutable) subgroups which satisfy the hypotheses of Theorem \ref{teoA} for some fixed prime $p$ and, however, there exist elements $x \in A \cup B$ such that $p^2$ divides either $\abs{x^A}$ or $\abs{x^B}$ (see Remark \ref{remark1} (b)).

The next theorem generalises the last assertion of \cite[Theorem 1.3]{BCL} regarding $p$-soluble groups, by considering only prime power order elements:

\begin{theoremletters}
\label{teoB}
Let $G = AB$ be the mutually permutable product of the subgroups $A$ and $B$, and let $p$ be a prime. Suppose that for every prime power order $p$-regular element $x\in A\cup B$, $\abs{x^{G}}$ is not divisible by $p^{2}$. If $G$ is $p$-soluble, then $G$ is $p$-supersoluble.
\end{theoremletters}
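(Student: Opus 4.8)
The plan is to argue by induction on $\abs{G}$, reducing to the analysis of a single self-centralising minimal normal $p$-subgroup. I would first observe that the hypotheses are inherited by every quotient $G/N$: mutual permutability and $p$-solubility pass to quotients, and if $\bar x\in\bar A\cup\bar B$ is a $p$-regular element of $G/N$ of $q$-power order with $q\ne p$, then, writing $\bar x=xN$ with $x$ in the appropriate factor and replacing $x$ by its $q$-part, one obtains a $q$-element $y$ of $A\cup B$ with $yN=\bar x$ and $\abs{\bar x^{G/N}}$ dividing $\abs{y^{G}}$, so the class-size condition descends too. Since the class of $p$-supersoluble groups is a subgroup- and quotient-closed saturated formation which contains every extension of a $p'$-group by a $p$-supersoluble group, the usual minimal-counterexample reductions give $\op{O}_{p'}(G)=1$, a unique minimal normal subgroup $N$, and $\fra{G}=1$; now $\op{O}_{p'}(G)=1$ forces $N$ to be an elementary abelian $p$-group with $N=\ce{G}{N}=\fit{G}=\op{O}_{p}(G)$, and $N$ is complemented, $G=N\rtimes M$ with $M\ne1$. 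By induction $G/N$ is $p$-supersoluble, so it remains to prove $\abs{N}=p$; here $M\cong G/\ce{G}{N}$ acts faithfully and irreducibly on the $\mathbb{F}_p M$-module $N$.

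Assume for a contradiction that $\abs{N}=p^{n}$ with $n\ge2$. Since a nontrivial $p$-group has no faithful irreducible module in characteristic $p$, some prime $q\ne p$ divides $\abs{M}$. If every $q$-element of $A\cup B$ centralised $N$, then $A\ce{G}{N}/\ce{G}{N}$ and $B\ce{G}{N}/\ce{G}{N}$ would both be $q'$-groups, and, $G/\ce{G}{N}$ being their product and a product of two $q'$-subgroups having order prime to $q$, we would get $q\nmid\abs{M}$, a contradiction. Hence there is a $q$-element $a\in A\cup B$ with $[N,a]\ne1$; as $a$ is a $p$-regular element of $A\cup B$ of prime power order, the hypothesis gives $\abs{G:\ce{G}{a}}_{p}\le p$, and therefore
\[
\abs{N:\ce{N}{a}}=\abs{N:N\cap\ce{G}{a}}\le\abs{G:\ce{G}{a}}_{p}\le p ,
\]
so that $a$ acts on $N$ with $\dim_{\mathbb{F}_p}[N,a]=1$. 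The very same computation shows that \emph{every} prime-power-order $p$-regular element of $A\cup B$ acts on $N$ either trivially or as a pseudoreflection, i.e.\ with one-dimensional commutator space on $N$.

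To finish I would invoke the structure of minimal normal subgroups in mutually permutable products (see \cite{BEA}): $N$ is either cyclic of prime order — so $\abs{N}=p$ and we are done — or $N$ lies in one of the factors, say $N\le A$ (if $N$ meets neither factor one is again led, by the same results, to $N$ cyclic; and if $A=G$ or $B=G$ the statement is the non-factorised one established in \cite{QW}). Then $A=A\cdot A$ is a mutually permutable product of strictly smaller order satisfying the hypotheses, so by induction $A$ is $p$-supersoluble, whence a minimal normal subgroup $N_{1}$ of $A$ contained in $N$ has order $p$. Since $B$ permutes with the subgroup $N_{1}$ of $A$, the set $N_{1}B$ is a subgroup, and, using $G=AB$ and $N_{1}^{a}=N_{1}$ for all $a\in A$,
\[
N_{1}^{G}=\langle N_{1}^{b}:b\in B\rangle\le N_{1}B\cap N=N_{1}(N\cap B) ;
\]
thus, provided $N\cap B=1$, we obtain $N_{1}^{G}=N_{1}$, so $N_{1}\trianglelefteq G$ and hence $N_{1}=N$ by the uniqueness of $N$, contradicting $n\ge2$. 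The main obstacle is the residual case $N\le A\cap B$: there one must exploit the pseudoreflection data of the previous paragraph together with the $p$-supersolubility of $B$ (again by induction) and the mutual permutability of $M=\bar A\bar B$ acting on $N$ in order to preclude $n\ge2$; it is precisely in these last manoeuvres that one has to make do with control only of the \emph{prime-power-order} $p$-regular elements of $A\cup B$, in place of the full class-size assumption used in \cite{BCL}.
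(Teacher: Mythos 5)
Your opening reductions are sound and match the paper's: minimal counterexample, $\op{O}_{p'}(G)=1$, $\fra{G}=1$, a unique minimal normal subgroup $N$ which is $p$-elementary abelian with $N=\ce{G}{N}=\fit{G}=\op{O}_{p}(G)$, and the task of showing $\abs{N}=p$. Your computation that every prime power order $p$-regular element $a\in A\cup B$ satisfies $\abs{[N,a]}=\abs{N:\ce{N}{a}}\le p$ is also correct and is exactly the right input. The problem is the endgame, and you flag it yourself: after appealing to the structure of minimal normal subgroups of mutually permutable products to place $N$ inside a factor, your argument via $N_{1}^{G}\le N_{1}(N\cap B)$ only disposes of the case $N\cap B=1$, and the case $N\le A\cap B$ is left entirely open ("one must exploit the pseudoreflection data \dots in order to preclude $n\ge2$" is a statement of intent, not an argument). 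This case cannot be waved away: it already occurs when $A=B=G$, i.e.\ in the unfactorised situation, and [QW, Theorem A] as used in this paper gives solubility and $p$-nilpotency under an extra arithmetic hypothesis, not $p$-supersolubility of $p$-soluble groups. So the proof is genuinely incomplete.

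The paper closes precisely this hole by working one level up instead of trying to locate $N$ itself in a factor. Applying Theorem \ref{prodnotrivial} to $G/N=(AN/N)(BN/N)$, one finds a minimal normal subgroup $Z/N$ of $G/N$ with, say, $Z/N\le AN/N$, so $Z=N(Z\cap A)$; since $\op{O}_{p}(G/N)=1$, $Z/N$ is a $p'$-group. For a Sylow $q$-subgroup $Q$ of $Z\cap A$ ($q\ne p$), your pseudoreflection computation gives $\abs{[N,a]}=p$ for every $1\ne a\in Q$ (the value $1$ is excluded because $\ce{G}{N}=N$), and now Lemma \ref{lemabcl} forces $QN/N$ to be cyclic. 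All Sylow subgroups of $Z/N$ cyclic, together with the minimality of $Z/N$, make $Z/N=\langle xN\rangle$ of prime order $q$ with $x$ a $q$-element of $Z\cap A$. Then $\ce{N}{x}=\ce{N}{Z}$ is normal in $G$, hence trivial, and $\abs{N}=\abs{[N,x]}=p$ by the class-size hypothesis applied to $x$. In short, the missing idea is to apply Lemma \ref{lemabcl} to a minimal normal subgroup of $G/N$ lying in $AN/N$ acting on $N$ — there the restriction to prime power order elements costs nothing, since the relevant elements are $q$-elements of $A$ — rather than to hunt for a $G$-invariant subgroup of order $p$ inside $N$.
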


In the line of \cite[Theorem 1]{CH} and \cite[Theorem 2]{CW}, if we consider all prime numbers, then we obtain some information about the structure of the derived subgroup of a factorised group $G$.

\begin{theoremletters}
\label{teoD}
Let $G = AB$ be the product of the subgroups $A$ and $B$, and assume that $G$ is supersoluble. Suppose that every prime power order element $x\in A\cup B$ has square-free conjugacy class size. Then:

\emph{(1)} $G'$ is abelian.
	
\emph{(2)} The Sylow subgroups of $G'$ are elementary abelian.

\emph{(3)} $\fit{G}'$ has Sylow $p$-subgroups of order at most $p^2$, for every prime $p$.
\end{theoremletters}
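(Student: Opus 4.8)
Here is how I would approach it.

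Since $G$ is supersoluble, $G'$ is nilpotent and hence $G'\le\fit G$; both $G'$ and $\fit G$ are the direct product of their Sylow subgroups, so for each prime $p$ the Sylow $p$-subgroup of $G'$ is $Q_p:=\op{O}_p(G)\cap G'$, which lies in $\op{O}_p(G)$, and moreover $\op{O}_p(G)'\le Q_p$. Consequently the statement is ``local'' at each prime $p$: it suffices to prove that $Q_p$ is abelian, that $Q_p$ has exponent $p$, and that $\abs{\op{O}_p(G)'}\le p^{2}$. For assertion~(1) I would also set up a minimal-counterexample argument: the hypotheses pass to every quotient $G/M=(AM/M)(BM/M)$ --- conjugacy class sizes only shrink under quotients, and any prime-power-order element of $AM/M$ lifts to a prime-power-order element of $A$ --- so a counterexample of least order would have $(G/M)'$ abelian for every minimal normal subgroup $M$, which forces $G''$ to be a minimal normal subgroup of $G$, an elementary abelian $p$-group (for some prime $p$), with $G'/G''$ abelian.

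The engine is Theorem~\ref{knoche}. By a standard fact on products of groups there exist a Sylow $p$-subgroup $P$ of $G$ and Sylow $p$-subgroups $P_A\le A$, $P_B\le B$ with $P=P_AP_B$; since $\op{O}_p(G)\le P$, everything then takes place inside the $p$-group $P$. The delicate point --- and, together with the minimal-counterexample reduction, the place where the factorisation really has to be used, since the hypothesis only controls the elements of $A\cup B$ --- is to check that $p^{2}\nmid\abs{x^{P}}$ for every $x\in P_A\cup P_B$ (in general $\abs{x^{P}}$ need not divide $\abs{x^{G}}$, so this requires the square-free hypothesis together with the paper's earlier lemmas on how such conditions descend in a product). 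Granting that, Theorem~\ref{knoche} gives $P'\le\fra P\le\ze P$ with $P'$ elementary abelian and $\abs{P'}\le p^{2}$; since $\op{O}_p(G)'\le P'$ this is exactly~(3), and it also shows $\op{O}_p(G)'$ is elementary abelian and central in $P$. In the minimal-counterexample situation, $G''=[\,\op{O}_p(G)\cap G',\op{O}_p(G)\cap G'\,]\le[P,P]=P'\le\ze P$ (as $G''$ is a $p$-group), so $\abs{G''}\le p^{2}$, and $[G',G'']\le G'''=1$ gives $G'\le C_G(G'')$; as also $P\le C_G(G'')$, the group $G/C_G(G'')$ is an abelian $p'$-group acting faithfully and irreducibly on $G''$, hence cyclic.

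What remains --- and what I expect to be the main obstacle --- is to contradict this configuration, i.e.\ to exhibit a prime-power-order element of $A\cup B$ whose $G$-class size is divisible by a square. The plan would be to combine the faithful cyclic action of $G/C_G(G'')$ on $G''\le\ze P$ with the coprime action on $\op{O}_p(G)$ of the prime-power-order $p'$-elements $y$ of $A\cup B$: for each such $y$, $\abs{\op{O}_p(G):C_{\op{O}_p(G)}(y)}\le p$ forces $[\op{O}_p(G),y]$ to be cyclic of order at most $p$ and central in $\op{O}_p(G)$; then, choosing the $G$-conjugates of $A$ and $B$ so that enough of these elements, or of $P_A\cup P_B$, act nontrivially on $G''$ and on a complement to it in $\op{O}_p(G)$, one is led to an element whose centraliser is too small, contradicting square-freeness and forcing $G''=1$, which establishes~(1). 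Assertion~(2) then follows from an analogous coprime-action analysis of the now-abelian group $Q_p$, using again that $[Q_p,P]\le P'$ has exponent $p$ and that the commutators $[\op{O}_p(G),y]$ contributed by prime-power-order $p'$-elements $y\in A\cup B$ have exponent $p$, so that $Q_p$ has exponent $p$.
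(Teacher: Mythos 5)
Your reduction framework is sound in outline, but two of the steps you yourself flag as ``delicate'' or ``the main obstacle'' are precisely where the content of the proof lives, and neither is resolved. First, the applicability of Theorem \ref{knoche}: for a non-normal Sylow $p$-subgroup $P=P_AP_B$, $\abs{x^{P}}$ need not divide $\abs{x^{G}}$ (indeed Remark 3.9(b) of the paper shows the class-size hypotheses do not descend to non-normal factors), so ``granting that'' is not an option. The paper removes the difficulty rather than confronting it: in each part it first reduces (by minimality, killing $\op{O}_{p'}$, and using that $G/\fit{G}$ is abelian by supersolubility) to the situation $\fit{G}=\op{O}_p(G)=P$ a \emph{normal} Sylow $p$-subgroup; then $\abs{x^P}$ divides $\abs{x^G}$ for $x\in (P\cap A)\cup(P\cap B)$ and Theorem \ref{knoche} applies legitimately. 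With that reduction your argument for (3) is essentially the paper's. Second, and more seriously, the heart of (1) is exactly the claim you assert in passing, that for a prime-power-order $p'$-element $y\in A\cup B$ the commutator $[\op{O}_p(G),y]$ is ``cyclic of order at most $p$ and central in $\op{O}_p(G)$.'' When $P=\op{O}_p(G)$ is nonabelian the coprime decomposition only gives $P=[P,y]\op{C}_P(y)$, not a direct product, so $\abs{P:\op{C}_P(y)}\le p$ does not bound $\abs{[P,y]}$, let alone place it in $\ze{P}$. The paper proves centrality by a genuinely different mechanism: it first shows (via the faithful action on $P/\ze{G}$ and Lemma \ref{lemabcl}) that the Hall $p'$-parts $H\cap A$, $H\cap B$ are cyclic, then applies the induction hypothesis to the proper normal subgroup $P\langle\alpha_q\rangle$ to get an abelian normal subgroup $K=P'[P,\alpha_q]$, and finally invokes Isaacs' Lemma \ref{isaac_lem} to force $[P,\alpha_q]=[P,\alpha_q,\alpha_q]\le\ze{P}$. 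Your alternative of ``choosing the $G$-conjugates of $A$ and $B$ so that enough of these elements act nontrivially'' is not an argument, and there is no obvious way to make it one, since conjugating $A$ and $B$ does not change which class sizes are controlled.

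Part (2) inherits the same problems: your plan that $Q_p$ has exponent $p$ rests on the unproved exponent/centrality claims for $[\op{O}_p(G),y]$ and on an implicit generation statement for $G'\cap\op{O}_p(G)$ by such commutators. The paper instead runs a separate minimal-counterexample argument using the nilpotent residual $G^{\mathfrak N}$ and its Carter-subgroup complement to force $\ze{G}=1$, whence $P'\le\fra{P}\le\ze{G}=1$ and the normal Sylow $p$-subgroup is itself elementary abelian. So: the skeleton (quotient-closure of the hypotheses, reduction to a unique minimal normal subgroup, Theorem \ref{knoche} on a normal Sylow subgroup, Lemma \ref{lemabcl} for cyclicity of the $p'$-parts) matches the paper, but the proposal is missing the induction-plus-Isaacs step that actually closes part (1) and the Carter-subgroup step that closes part (2).
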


If we limit our conditions only to $p$-regular elements, as a consequence of Theorems \ref{teoA} and \ref{teoB}, we obtain the following result which extends \cite[Corollary 1.5]{BCL} (see Corollary \ref{cor1.5BCL}) for prime power order elements, and also a theorem of \cite{LI}, for products of groups.

\begin{theoremletters}
\label{teoC}
Let $G = AB$ be the mutually permutable product of the subgroups $A$ and $B$. Suppose that for every prime $p$ and every prime power order $p$-regular element $x\in A\cup B$, $\abs{x^{G}}$ is not divisible by $p^{2}$. Then $G$ is supersoluble, and $G/\op{F}(G)$ has elementary abelian Sylow subgroups.
\end{theoremletters}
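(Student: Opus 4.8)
The plan is to derive all three assertions from Theorems \ref{teoA} and \ref{teoB}, using the solubility of $G$ as a bridge, and to handle the Sylow structure by induction on $\abs G$.

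\emph{Solubility and supersolubility.} Let $p$ be the least prime dividing $\abs G$. Then $\operatorname{gcd}(p-1,\abs G)=1$ automatically; together with the square-free hypothesis at this prime, this is precisely the hypothesis of Theorem \ref{teoA}, so $G$ is soluble by Theorem \ref{teoA}(1). Being soluble, $G$ is $p$-soluble for every prime $p$, so Theorem \ref{teoB} applies for each $p$ and yields that $G$ is $p$-supersoluble for every prime $p$. Since each chief factor of the soluble group $G$ is a $q$-group for some $q$, and $q$-supersolubility forces all $q$-chief factors to be cyclic, every chief factor of $G$ is cyclic; hence $G$ is supersoluble.

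\emph{Reduction of the last assertion.} As $\fit G$ is nilpotent, it is the direct product of $\op{O}_p(G)$ with a $p'$-group, so $P\cap\fit G=\op{O}_p(G)$ for $P\in\operatorname{Syl}_p(G)$; hence a Sylow $p$-subgroup of $G/\fit G$ is isomorphic to $P/\op{O}_p(G)$, that is, to a Sylow $p$-subgroup of $G/\op{O}_p(G)$. It therefore suffices to show, for each prime $p$, that the Sylow $p$-subgroups of $G/\op{O}_p(G)$ are elementary abelian. When $\operatorname{gcd}(p-1,\abs G)=1$ this is Theorem \ref{teoA}(3). For the other primes I argue by induction on $\abs G$: the hypothesis is inherited by the quotient $G/\op{O}_p(G)$, because a prime power order element of $\overline A\cup\overline B$ lifts to one of $A\cup B$ of the same prime (as $\op{O}_p(G)$ is a $p$-group) and conjugacy class sizes do not grow under quotients; since moreover $\op{O}_p(G/\op{O}_p(G))=1$, a minimal counterexample satisfies $\op{O}_p(G)=1$ for the offending prime $p$, and then what must be shown is simply that $P$ is elementary abelian.

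\emph{The case $\op{O}_p(G)=1$.} Now $\fit G$ is a nontrivial $p'$-group with $\ce G{\fit G}\leqslant\fit G$, so $P$ acts faithfully on $\fit G$ and thus embeds in $\prod_{q}P/\ce P{\op{O}_q(G)}$; as $P$ is abelian (it is isomorphic to a Sylow subgroup of the abelian group $G/\fit G$), it is enough to show that each $P/\ce P{\op{O}_q(G)}$ has exponent $p$, for the primes $q\neq p$ with $\op{O}_q(G)\neq1$. Here one invokes the square-free condition at $p$ for the $q$-elements of $A\cup B$ lying in $\op{O}_q(G)$: such a $z$ has $\abs{z^G}$ not divisible by $p^2$, hence centralises a subgroup of index at most $p$ in a suitable Sylow $p$-subgroup of $G$; combining these centraliser bounds for one common Sylow $p$-subgroup with a coprime-action analysis of $P$ on the $q$-group $\op{O}_q(G)$ — in the spirit of the proof of Theorem \ref{teoA}(3) and of the $p$-group statement Theorem \ref{knoche}, and using the mutual permutability of $G=AB$ to describe $\op{O}_q(G)\cap A$ and $\op{O}_q(G)\cap B$ — forces $P/\ce P{\op{O}_q(G)}$ to be elementary abelian. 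This last step, for primes $p$ with $\operatorname{gcd}(p-1,\abs G)\neq1$, is the main obstacle: it requires both the structure theory of mutually permutable products, to control how $\op{O}_q(G)$ meets the two factors, and the quantitative arguments behind Theorem \ref{teoA}, to patch the local centraliser estimates together.
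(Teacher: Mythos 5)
Your first paragraph (solubility via Theorem \ref{teoA}(1) applied at the smallest prime divisor of $\abs{G}$, then $p$-supersolubility for every $p$ via Theorem \ref{teoB}, hence supersolubility) is exactly the paper's argument and is correct, as is your reduction of the last assertion, by induction on $\abs{G}$, to the case $\op{O}_p(G)=1$, where the goal becomes showing that $P\in\operatorname{Syl}_p(G)$ is elementary abelian.

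The gap is in your final paragraph, and you essentially flag it yourself: for the primes $p$ with $\operatorname{gcd}(p-1,\abs{G})\neq 1$ you never close the argument. The assertion that the centraliser bounds coming from $q$-elements $z\in\op{O}_q(G)\cap(A\cup B)$ can be ``patched together'' by a coprime-action analysis to force $P/\ce{P}{\op{O}_q(G)}$ to have exponent $p$ is a statement of intent, not a proof; it is not clear how to carry it out (for one thing, $\op{O}_q(G)$ need not be generated by elements of $A\cup B$, and a bound $\abs{P:\ce{P}{z}}\leq p$ for individual elements $z$ does not by itself control the exponent of $P$ modulo $\ce{P}{\op{O}_q(G)}$). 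The point you are missing is that no new argument is needed: once $\op{O}_p(G)=1$ and $G$ is supersoluble, $\fit{G}$ is a $p'$-group, so a Hall $p'$-subgroup $H$ of $G$ contains $\fit{G}$, and $H/\fit{G}$ is normal in the abelian group $G/\fit{G}$; hence $H$ is normal in $G$ and $G$ is $p$-nilpotent. Now Theorem \ref{teoElementary} --- the general form from which the paper deduces Theorem \ref{teoA}(3), and which assumes only that $G$ is soluble and $p$-nilpotent, with no hypothesis on $\operatorname{gcd}(p-1,\abs{G})$ --- applies verbatim and gives that $P\cong P/\op{O}_p(G)$ is elementary abelian. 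You quoted only Theorem \ref{teoA}(3) itself, which is why the remaining primes looked like an obstacle.
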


We remark that the first statement in Theorem \ref{teoD} is not further true under the weaker hypotheses of the above theorem, even for arbitrary groups not necessarily factorised, as pointed out in \cite{QW}.  Indeed, as a result of Theorem \ref{teoC}, the supersolubility condition in Theorem \ref{teoD} can be exchanged by the mutual permutability of the factors.

On the other hand, with the stronger assumption that all $p$-regular elements of the factors (not only those of prime power order) have conjugacy class sizes not divisible by $p^2$, we get extra information about the orders of the Sylow $p$-subgroups of $G/F(G)$, extending partially \cite[Theorem 2]{CW}.

\begin{theoremletters}
\label{teoE}
Let $G = AB$ be the mutually permutable product of the subgroups $A$ and $B$. Suppose that for every prime $p$ and every $p$-regular element $x\in A\cup B$, $\abs{x^{G}}$ is not divisible by $p^{2}$. Then the order of a Sylow $p$-subgroup of $G/\op{F}(G)$ is at most $p^2$.
\end{theoremletters}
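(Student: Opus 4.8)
The plan is to combine Theorem \ref{teoC} with a careful analysis of the Fitting quotient, exploiting the stronger hypothesis that \emph{all} $p$-regular elements of $A\cup B$ (not just those of prime power order) have class size not divisible by $p^2$. First I would fix a prime $p$ and pass to the quotient $\bar G = G/\op{O}_{p'}(G)$; since $\op{O}_{p'}(G)$ is a normal $p'$-subgroup, the images $\bar A = A\op{O}_{p'}(G)/\op{O}_{p'}(G)$ and $\bar B$ still form a mutually permutable product, and one checks that the hypothesis on $p$-regular class sizes is inherited by $\bar G$ (the class size of an image divides the class size of a preimage, and every $p$-regular element of $\bar A\cup\bar B$ lifts to a $p$-regular element of $A\cup B$ after adjusting by the $p'$-part, using coprimality). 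By Theorem \ref{teoC}, $G$ is supersoluble, hence so is $\bar G$; moreover $\op{O}_{p'}(\bar G)=1$, so $\op{F}(\bar G)=\op{O}_p(\bar G)$ and $\ce{\bar G}{\op{O}_p(\bar G)}\leqslant \op{O}_p(\bar G)$. It therefore suffices to bound $\abs{\bar G/\op{O}_p(\bar G)}_p$ by $p^2$, since $\op{F}(\bar G)/\op{O}_{p'}(\bar G)\cdot$-type arguments reduce the general claim to this case (one must also observe $\op{F}(G)\op{O}_{p'}(G)/\op{O}_{p'}(G)\leqslant \op{F}(\bar G)$ so a Sylow $p$-subgroup of $G/\op{F}(G)$ maps onto one of $\bar G/\op{F}(\bar G)$ with kernel a $p'$-group, hence they have the same $p$-part).

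Next, working inside $\bar G$ with $\op{O}_{p'}(\bar G)=1$, let $\bar P$ be a Sylow $p$-subgroup of $\bar G$; then $V:=\op{O}_p(\bar G)\leqslant \bar P$ and $\bar G$ acts on $V/\fra{V}$. The key point is to bound the order of a Sylow $p$-subgroup $\bar P/V$ of $\bar G/V$. I would pick a $p$-regular element: take any nontrivial $p$-regular $\bar x$ in $\bar A$ or $\bar B$ (one may assume $\bar G\neq V$, otherwise there is nothing to prove, and then supersolubility together with $\op{O}_{p'}(\bar G)=1$ forces a nontrivial $p$-regular element to exist in one of the factors after the usual reductions on the factorisation). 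Since $p^2\nmid\abs{\bar x^{\bar G}}$, the index $\abs{\bar G:\ce{\bar G}{\bar x}}$ has $p$-part at most $p$; hence $\ce{\bar P}{\bar x}$ has index at most $p$ in $\bar P$. Running this over a suitable generating set of $p$-regular elements and using that $\bar G/V$ acts faithfully and completely reducibly on the $p$-group $V/\fra V$ (because $\ce{\bar G}{V}\leqslant V$), one deduces that $\bar P/V$ embeds into a group generated by two transvection-like elements, giving $\abs{\bar P/V}\leqslant p^2$. Here I would lean on the structure already extracted: by Theorem \ref{teoB} the group is $p$-supersoluble, so the chief factors below $V$ have order $p$, and the action of $\bar P/V$ on each is by scalars from $\mathbb F_p^\times$; coprimality $\operatorname{gcd}(p-1,\abs G)=1$ is \emph{not} assumed here, so instead the bound must come directly from the class-size condition constraining fixed points of $p$-regular elements.

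The most delicate step — and the one I expect to be the main obstacle — is precisely deducing the bound $\abs{\bar P/V}\leqslant p^2$ from the local condition that each $p$-regular element of the factors centralises a subgroup of index at most $p$ in $\bar P$. The subtlety is that the $p$-regular elements available to us live in $\bar A\cup\bar B$, not in all of $\bar G$, so one cannot directly invoke a classical result about groups all of whose elements have small class size; one must genuinely use the mutual permutability to transfer information from the factors to the whole group, presumably via the fact (from the cited literature \cite{BCL}) that in a mutually permutable product the factors control the $p$-structure rather tightly, together with an inductive argument on $\abs{G}$ peeling off minimal normal subgroups. A secondary technical annoyance is verifying that the hypotheses genuinely pass to $\bar G=G/\op{O}_{p'}(G)$ and to quotients by minimal normal $p$-subgroups contained in $\op{F}(G)$; this is routine but needs the observation that conjugacy class sizes can only shrink in quotients and that $p$-regularity of prime-power-order generators is preserved, so the induction closes.
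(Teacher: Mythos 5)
There are two genuine gaps here, and the second is fatal to the whole strategy. First, your reduction to $\bar G = G/\op{O}_{p'}(G)$ goes the wrong way: the kernel of the induced surjection $G/\op{F}(G)\twoheadrightarrow \bar G/\op{F}(\bar G)$ is \emph{not} in general a $p'$-group, because $\op{O}_p(\bar G)$ can be strictly larger than the image of $\op{O}_p(G)$. Already for $G=D_{2q}$ ($q$ odd) and $p=2$ one has $G/\op{F}(G)\cong C_2$ while $\bar G=\op{F}(\bar G)\cong C_2$, so $\bar G/\op{F}(\bar G)=1$; bounding $\abs{\bar G/\op{F}(\bar G)}_p$ therefore tells you nothing about $\abs{G/\op{F}(G)}_p$. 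The reduction that does preserve the quantity $\abs{G/\op{F}(G)}_p=\abs{G}_p/\abs{\op{O}_p(G)}$ is factoring out $\op{O}_p(G)$ (and then $\fra{G}$, $\ze{G}$), which is what the paper does; that reduction also lands in the \emph{useful} configuration where $\op{F}(G)$ is a $p'$-group, so its elements are exactly the $p$-regular elements your hypothesis controls. Your reduction instead makes $\op{F}(\bar G)=\op{O}_p(\bar G)$ a $p$-group, so every element of $\op{F}(\bar G)$ is one the hypothesis says nothing about, and your subsequent attempt to control the faithful action of $\bar P/V$ on $V/\fra{V}$ via centralisers of $p$-regular elements has no traction.

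Second, and decisively: your sketch only ever applies the class-size condition to individual $p$-regular elements that might as well have prime power order, and you yourself flag the passage from ``each such element centralises a subgroup of index at most $p$ in $\bar P$'' to $\abs{\bar P/V}\le p^2$ as an unproved obstacle. But under the prime-power-order-only hypothesis the conclusion of Theorem \ref{teoE} is simply \emph{false}: Example \ref{exampleG/FittingPrimePower} ($G=D_{2p_1}\times\cdots\times D_{2p_n}$, a mutually permutable product in which every prime power order element of the factors has square-free class size, yet $\abs{G/\fit{G}}=2^n$). So no argument of the shape you describe can close; any correct proof must produce and exploit a $p$-regular element of a factor of \emph{composite} order. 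This is precisely the paper's key move: after reducing to $G=P\op{F}(G)$ with $\op{F}(G)=\op{Soc}(G)=N\times T$ a $p'$-group, $N\leqslant A$ minimal normal, and $G=(C_1T)\times(C_2N)$ with $C_i\leqslant P$, one takes $x\in N$ and $y\in T\cap A$ and observes that $xy$ is a $p$-regular element of $A$ with $\abs{(xy)^G}=\abs{x^G}\cdot\abs{y^G}=p^2$, a contradiction forcing $T\cap A=1$; the remaining cases are then killed by Proposition \ref{proptec} and Lemma \ref{lemabcl}. That construction of a composite-order witness inside a single factor is the missing idea in your proposal.
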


In summary, when dealing with mutually permutable products, the next corollary follows directly from the above theorems.

\begin{corollaryletters}
Let $G = AB$ be the mutually permutable product of the subgroups $A$ and $B$. Suppose that $\abs{x^{G}}$ is square-free for each element $x\in A\cup B$. Then $G$ is supersoluble, and both $G/\op{F}(G)$ and $G'$ have elementary abelian Sylow subgroups. Moreover, $G'$ is abelian, and both groups $G/\op{F}(G)$ and $\fit{G}'$ have Sylow $p$-subgroups of order  at most $p^2$, for each prime $p$. 
\end{corollaryletters}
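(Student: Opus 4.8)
The plan is to observe that the hypothesis of the corollary is (strictly) stronger than the hypotheses of each of Theorems~\ref{teoC}, \ref{teoD} and \ref{teoE}, and then merely to collect their conclusions. The underlying triviality is that a positive integer is square-free precisely when it is not divisible by $p^{2}$ for any prime $p$; hence, assuming $\abs{x^{G}}$ is square-free for every $x \in A \cup B$, we obtain that for every prime $p$ and every element $x \in A \cup B$ --- in particular for every $p$-regular element of $A \cup B$, and for every prime power order $p$-regular element of $A \cup B$ --- the size $\abs{x^{G}}$ is not divisible by $p^{2}$.

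First I would invoke Theorem~\ref{teoC}, whose hypothesis (that $\abs{x^{G}}$ is not divisible by $p^{2}$ for every prime $p$ and every prime power order $p$-regular element $x \in A \cup B$) is now available; this yields that $G$ is supersoluble and that $G/\fit{G}$ has elementary abelian Sylow subgroups. With the supersolubility of $G$ in hand, the hypotheses of Theorem~\ref{teoD} are satisfied --- every prime power order element of $A \cup B$ has square-free conjugacy class size by assumption --- so $G'$ is abelian, the Sylow subgroups of $G'$ are elementary abelian, and $\fit{G}'$ has Sylow $p$-subgroups of order at most $p^{2}$ for each prime $p$. Finally, since for every prime $p$ each $p$-regular element of $A \cup B$ has conjugacy class size not divisible by $p^{2}$, Theorem~\ref{teoE} gives that a Sylow $p$-subgroup of $G/\fit{G}$ has order at most $p^{2}$, for each prime $p$. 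Putting these three batches of conclusions together is exactly the assertion of the corollary.

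There is essentially no obstacle here beyond organising the implications in the right order: the only subtlety is that Theorem~\ref{teoD} requires the supersolubility of $G$ as an input, so Theorem~\ref{teoC} must be applied first to supply it (equivalently, as noted right after Theorem~\ref{teoC}, the supersolubility hypothesis of Theorem~\ref{teoD} may be exchanged for the mutual permutability of the factors, which makes this dependence moot). Everything else is immediate from the stated theorems.
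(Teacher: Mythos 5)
Your proposal is correct and is exactly how the paper intends the corollary to be read: the paper gives no separate argument, stating only that it ``follows directly from the above theorems,'' and your ordering (Theorem~\ref{teoC} first to supply supersolubility for Theorem~\ref{teoD}, then Theorem~\ref{teoE} for the bound on $G/\op{F}(G)$) is the right way to organise that deduction.
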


In Section \ref{sec1} we prove Theorems \ref{knoche}, \ref{teoA} and \ref{teoB}, which refer to class sizes not divisible by $p^2$, for a fixed prime $p$. Theorems \ref{teoC}, \ref{teoD} and \ref{teoE}, which consider square-free conjugacy class sizes (for all primes), are proved in Section \ref{sec2}.  In both cases we will illustrate the scope of the results presented with some examples.

%%%%%%%%%%%%%%%%%%%%%%%%%%%%%%%%%%%%%%%%%%%%%%%%%%%%%%%%%%%%%%%%%%%%%%%%%%%%%%%%%%%%%%%%%%%%%%%%%

\section{Preliminary results}

We use the following elementary properties frequently, sometimes without further reference.

\begin{lemma}
Let $N$ be a normal subgroup of a group $G$, and let $p$ be a prime. Then:

\emph{(a)} $\abs{x^N}$ divides $\abs{x^G}$, for any $x\in N$.
	
\emph{(b)} $\abs{(xN)^{G/N}}$ divides $\abs{x^G}$, for any $x\in G$.
	
\emph{(c)}  If $xN$ is a $p$-element of $G/N$, then there exists a $p$-element $x_{1}\in G$ such that $xN = x_{1}N$.
\end{lemma}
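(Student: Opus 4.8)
The plan is to derive all three assertions directly from the behaviour of centralisers under the natural projection $\pi\colon G\to G/N$, using only Lagrange's theorem and the uniqueness of the primary ($p$- and $p'$-) decomposition of an element of finite order. For part (a), if $x\in N$ then $\ce{N}{x}=\ce{G}{x}\cap N$, so $\abs{x^N}=\abs{N:\ce{G}{x}\cap N}$. Since $N$ is normal in $G$, the set $N\,\ce{G}{x}$ is a subgroup of $G$, and by the isomorphism theorems $\abs{N:N\cap\ce{G}{x}}=\abs{N\,\ce{G}{x}:\ce{G}{x}}$; as $\ce{G}{x}\leqslant N\,\ce{G}{x}\leqslant G$, this quantity divides $\abs{G:\ce{G}{x}}=\abs{x^G}$, which gives (a).

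For part (b), the projection $\pi$ sends $\ce{G}{x}$ into $\ce{G/N}{xN}$, so $\abs{(xN)^{G/N}}=\abs{G/N:\ce{G/N}{xN}}$ divides $\abs{G/N:\pi(\ce{G}{x})}=\abs{G:N\,\ce{G}{x}}$, and the latter in turn divides $\abs{G:\ce{G}{x}}=\abs{x^G}$ since $\ce{G}{x}\leqslant N\,\ce{G}{x}\leqslant G$. Equivalently, one may argue that the conjugation-equivariant surjection $x^G\to(xN)^{G/N}$ given by $x^g\mapsto x^gN$ has all fibres of the same size, so $\abs{(xN)^{G/N}}$ divides $\abs{x^G}$.

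For part (c), write the order of $x$ as $p^{a}b$ with $\gcd(p,b)=1$ and let $x=x_{1}x_{2}$ be the decomposition into its commuting $p$-part $x_{1}$ and $p'$-part $x_{2}$, both powers of $x$. Applying $\pi$ yields $xN=(x_{1}N)(x_{2}N)$, where $x_{1}N$ is a $p$-element and $x_{2}N$ a $p'$-element of $G/N$ which commute; since $xN$ is assumed to be a $p$-element and the primary decomposition of an element of finite order is unique, $x_{2}N=1$, hence $xN=x_{1}N$ with $x_{1}$ a $p$-element, as required. The only points needing a little care — and thus the sole, very mild, obstacle — are to obtain genuine divisibility rather than a mere inequality of orders in (a) and (b), which is precisely where the normality of $N$ enters (guaranteeing that $N\,\ce{G}{x}$ is a subgroup so that Lagrange applies), and to invoke the uniqueness of the $p/p'$ decomposition in (c).
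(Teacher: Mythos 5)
Your proof is correct and complete: all three parts are the standard arguments (the second isomorphism theorem applied to $N\ce{G}{x}$ for (a), the containment $\ce{G}{x}N/N\leqslant\ce{G/N}{xN}$ for (b), and uniqueness of the commuting $p$/$p'$-decomposition for (c)). The paper states this lemma without proof as a collection of well-known elementary facts, so there is no alternative argument to compare against; your write-up supplies exactly the expected justifications.
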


We need specifically the following fact about Hall subgroups of factorised groups. It is a reformulation of \cite[1.3.2]{AMB} which is convenient for our purposes. We recall that a group is  a D$_{\pi}$-group, for a set of primes $\pi$, if every $\pi$-subgroup is contained in a Hall $\pi$-subgroup, and any two Hall $\pi$-subgroups are conjugate. In particular, all $\pi$-separable groups are  D$_{\pi}$-groups for any set of primes $\pi$, and all groups are D$_{\pi}$-groups when $\pi$ consists of a single prime.

\begin{lemma}\label{1.3.2}
Let $G=AB$ be the product of the subgroups $A$ and $B$. Asume that $A, B$, and $G$ are D$_{\pi}$-groups for a set  of primes $\pi$. Then there exists a Hall $\pi$-subgroup $H$ of $G$ such that $H= (H \cap A)(H \cap B)$, with $H \cap A$ a Hall $\pi$-subgroup of $A$ and $H \cap B$ a Hall $\pi$-subgroup of $B$.
\end{lemma}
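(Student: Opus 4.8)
The statement is essentially a bookkeeping consequence of \cite[1.3.2]{AMB} together with the order identity $\abs{AB}=\abs{A}\,\abs{B}/\abs{A\cap B}$, so the plan is to assemble these two ingredients and check that they deliver the reformulation stated here.

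Since $A$, $B$ and $G$ are all D$_{\pi}$-groups, the first step is to invoke \cite[1.3.2]{AMB} for the factorisation $G=AB$ to obtain a Hall $\pi$-subgroup $H$ of $G$ such that $H\cap A$ is a Hall $\pi$-subgroup of $A$ and $H\cap B$ is a Hall $\pi$-subgroup of $B$. (If, as one expects, the cited result is formulated for one particular Hall $\pi$-subgroup of $G$, or only up to conjugacy, this is harmless: all Hall $\pi$-subgroups of the D$_{\pi}$-group $G$ are conjugate, so one simply names $H$ to be the one produced.)

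It then remains to derive the factorisation $H=(H\cap A)(H\cap B)$, and I would do this by a pure order count. Writing $n_{\pi}$ for the $\pi$-part of an integer $n$, one has $\abs{H}=\abs{G}_{\pi}=\bigl(\abs{A}\,\abs{B}/\abs{A\cap B}\bigr)_{\pi}=\abs{A}_{\pi}\,\abs{B}_{\pi}/\abs{A\cap B}_{\pi}$, while $\abs{H\cap A}=\abs{A}_{\pi}$ and $\abs{H\cap B}=\abs{B}_{\pi}$ by the previous step. As $H\cap A\cap B$ is a $\pi$-subgroup of $A\cap B$, its order divides $\abs{A\cap B}_{\pi}$, and hence
\[
\abs{(H\cap A)(H\cap B)}=\frac{\abs{H\cap A}\,\abs{H\cap B}}{\abs{H\cap A\cap B}}\geq\frac{\abs{A}_{\pi}\,\abs{B}_{\pi}}{\abs{A\cap B}_{\pi}}=\abs{H}.
\]
Since $(H\cap A)(H\cap B)\subseteq H$, equality holds, so $(H\cap A)(H\cap B)=H$; in particular this set is a subgroup, and the lemma follows (one also obtains, as a by-product, that $H\cap A\cap B$ is a Hall $\pi$-subgroup of $A\cap B$).

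The only point calling for care — and the one I would check against the exact wording of \cite{AMB} — is precisely which D$_{\pi}$-hypotheses are needed to apply \cite[1.3.2]{AMB}: the original formulation may, in addition, ask that $A\cap B$ be a D$_{\pi}$-group. This is not a genuine obstruction, since a Hall $\pi$-subgroup of $A\cap B$ drops out of the computation above regardless; but to be safe one should either add that hypothesis explicitly, or remark that in every application of this lemma in the paper the factors are $p$-soluble, hence $\pi$-separable, so that $A$, $B$, $A\cap B$ and $G$ are all automatically D$_{\pi}$-groups. With that caveat, no further argument is required.
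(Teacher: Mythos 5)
Your proposal is correct, and it matches the paper's approach: the paper offers no proof of this lemma at all, merely describing it as a convenient reformulation of \cite[1.3.2]{AMB}, so the only content to supply is exactly the bridging argument you give. Your order count is sound — since $H\cap A\cap B$ is a $\pi$-subgroup of $A\cap B$ its order divides $\abs{A\cap B}_{\pi}$, which forces $\abs{(H\cap A)(H\cap B)}\geq\abs{H}$ and hence equality — and your caveat about checking the exact D$_{\pi}$-hypotheses in \cite{AMB} is the right thing to flag, though as you note it is immaterial for every application in the paper.
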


We collect here some results on mutually permutable products, which will be very useful along the paper.

\begin{lemma}
\label{mutperm}
Let the group $G = AB$ be the product of the mutually permutable subgroups $A$ and $B$. Then we have:

\emph{(a)} \emph{(\cite[4.1.10]{BEA})} $G/N$ is the product of the mutually permutable subgroups $AN/N$ and $BN/N$.
	
\emph{(b)} \emph{(\cite[4.1.21]{BEA})} If $U$ is a subgroup of $G$, then $(U\cap A)(U\cap B)$ is a subgroup, and $U\cap A$ and $U\cap B$ are mutually permutable. Moreover, if $N$ is a normal subgroup of $G$, then $(N\cap A)(N\cap B)$ is also normal in $G$. 
\end{lemma}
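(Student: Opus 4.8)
\emph{Approach.} I would derive both parts directly from the definition of mutual permutability and Dedekind's modular law, with no recourse to deeper structure theory; the only tool needed beyond the modular law is that, whenever $H$ is a subgroup of $A$ and $K$ a subgroup of $B$, the set $HK$ is a subgroup. For part (a), first note that $G/N=(AN/N)(BN/N)$ is a genuine factorisation: since $N$ is normal in $G$, $AN\cdot BN=A(BN)N=ABN=GN=G$. To check mutual permutability, take an arbitrary subgroup of $BN/N$, say $V/N$ with $N\leqslant V\leqslant BN$; the modular law (with $N\leqslant V$) gives $V=V\cap BN=(V\cap B)N$, so that subgroup equals $(V\cap B)N/N$. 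Now $(AN)\bigl((V\cap B)N\bigr)=A\bigl((V\cap B)N\bigr)N=\bigl(A(V\cap B)\bigr)N$, and $A(V\cap B)$ is a subgroup because $A$ permutes with the subgroup $V\cap B$ of $B$; being the product of a subgroup with the normal subgroup $N$, this set is a subgroup of $G$. Hence $AN/N$ permutes with every subgroup of $BN/N$, and the symmetric argument finishes part (a).

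\emph{Part (b), subgroup and mutual permutability.} For a subgroup $U\leqslant G$, set $D=(U\cap A)(U\cap B)$. Since $A$ permutes with $U\cap B\leqslant B$, the product $A(U\cap B)$ is a subgroup, and I claim $D=U\cap A(U\cap B)$: one inclusion is obvious, and if $x=ab\in U$ with $a\in A$ and $b\in U\cap B$, then $a=xb^{-1}\in U\cap A$, so $x\in D$. Thus $D$ is an intersection of two subgroups, hence a subgroup. The same computation, performed with an arbitrary subgroup $W$ of $U\cap B$ in the role of $U\cap B$, shows $(U\cap A)W=U\cap AW$ is a subgroup (using $W\leqslant U$), and symmetrically for subgroups of $U\cap A$; so $U\cap A$ and $U\cap B$ are mutually permutable.

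\emph{Part (b), moreover clause.} Assume $N$ is normal in $G$ and put $D=(N\cap A)(N\cap B)$, a subgroup by the previous paragraph, with $N\cap A$ normal in $A$ and $N\cap B$ normal in $B$. I would show $B$ normalises $D$: for $b\in B$, the subgroup $B(N\cap A)$ contains both $b$ and $N\cap A$, so $(N\cap A)^b\leqslant B(N\cap A)$, and of course $(N\cap A)^b\leqslant N$; since the modular law gives $N\cap B(N\cap A)=(N\cap B)(N\cap A)=D$, we obtain $(N\cap A)^b\leqslant D$, hence $D^b=(N\cap A)^b(N\cap B)\leqslant D$, and applying this to $b^{-1}$ yields $D^b=D$. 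The symmetric argument shows $A$ normalises $D$, so $D$ is normal in $G=AB$.

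\emph{Main obstacle.} The argument is elementary bookkeeping with the modular law; the only point needing care is the moreover clause, because the two factors $N\cap A$ and $N\cap B$ cannot be conjugated independently back into their product — neither is a priori normalised by all of $G$. The key device is to route the conjugate $(N\cap A)^b$ back into $D$ through the auxiliary subgroup $B(N\cap A)$ before intersecting with $N$, and it is precisely here that mutual permutability of the factors (not merely permutability of $A$ with $B$) is used.
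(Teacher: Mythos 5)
Your proof is correct. Note that the paper itself gives no argument here: both parts are quoted from Ballester-Bolinches--Esteban-Romero--Asaad (\cite[4.1.10 and 4.1.21]{BEA}), so there is nothing internal to compare against; your self-contained derivation via Dedekind's modular law is essentially the standard proof from that reference. All the delicate points are handled properly: the identification $(U\cap A)W = U\cap AW$ (which needs $W\leqslant U$), and in the moreover clause the routing of $(N\cap A)^b$ into $D$ through the auxiliary subgroup $B(N\cap A)$ before intersecting with $N$ --- which is indeed exactly where the full strength of mutual permutability, rather than mere permutability of $A$ with $B$, is used.
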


\begin{theorem}\emph{(\cite[Theorem 1]{BH})}
\label{prodnotrivial}
Let the non-trivial group $G=AB$ be the product of the mutually permutable subgroups $A$ and $B$. Then $A_{G}B_{G}$ is not trivial.
\end{theorem}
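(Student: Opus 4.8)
The plan is to argue by induction on $|G|$. Suppose the statement fails and let $G = AB$ be a counterexample of least order, so $A_G = B_G = 1$ while $G \neq 1$. First one notes that $A \neq 1 \neq B$: if, say, $A = 1$ then $G = B$ is normal in itself, whence $B_G = G \neq 1$, a contradiction. Now fix a minimal normal subgroup $N$ of $G$; since $A_G = B_G = 1$, neither $N \leq A$ nor $N \leq B$ can hold.

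The key tool is Lemma~\ref{mutperm}(b), applied to the normal subgroup $N$: it gives that $(N \cap A)(N \cap B)$ is a normal subgroup of $G$ contained in $N$, and that $N \cap A$ and $N \cap B$ are mutually permutable. Minimality of $N$ forces $(N \cap A)(N \cap B) = 1$ or $(N \cap A)(N \cap B) = N$. In the latter case $N$ is the mutually permutable product of $N \cap A$ and $N \cap B$, and these must be proper in $N$ (otherwise $N \leq A$ or $N \leq B$) and non-trivial (if $N \cap B = 1$ then $N = N \cap A \leq A$, excluded). One then rules this out using the structure of the characteristically simple group $N$: if $N = G$ this is the fact that a simple group is not the mutually permutable product of two proper subgroups, while for $N < G$ one applies the induction hypothesis to $N = (N \cap A)(N \cap B)$ together with the observation that the core $(N \cap A)_N$ is even $A$-invariant (for $a \in A$ one has $\bigl((N \cap A)_N\bigr)^a \leq (N \cap A)^a = N \cap A$ and this is normal in $N^a = N$, hence contained in $(N \cap A)_N$), so that its normal closure in $G$ equals $N$ by minimality; combining this with the dual statement for $B$ and with the classification of the minimal normal subgroups of a mutually permutable product in \cite{BEA} yields a contradiction. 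Hence we may assume $N \cap A = N \cap B = 1$, and, again by that classification, that $N$ is cyclic of prime order $p$.

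Next one passes to $\overline{G} := G/N = \overline{A}\,\overline{B}$, with $\overline{A} = AN/N$ and $\overline{B} = BN/N$; this is a mutually permutable product by Lemma~\ref{mutperm}(a), of smaller order. It is non-trivial, for otherwise $G = N$ is cyclic, $A \trianglelefteq G$ and $A_G = A \neq 1$, a contradiction. The induction hypothesis gives $\overline{A}_{\overline{G}}\,\overline{B}_{\overline{G}} \neq 1$, so without loss of generality there is a minimal normal subgroup $L/N$ of $\overline{G}$ with $L/N \leq \overline{A}$; then $L \trianglelefteq G$, $N < L \leq AN$, and the modular law gives $L = N(L \cap A)$ with $1 \neq L \cap A \leq A$ (indeed $|L \cap A| = |L/N|$, since $L \cap A \cap N \leq A \cap N = 1$). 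It remains to exhibit a non-trivial characteristic subgroup of $L$ contained in $A$: being characteristic in $L \trianglelefteq G$ it is then normal in $G$, contradicting $A_G = 1$. Here one exploits that $|N| = p$, so that $[N, L \cap A]$ is $1$ or $N$ and $L = N(L\cap A)$ is only a mild extension of $L \cap A$ (in the first case $L = N \times (L \cap A)$), together with the mutual permutability of $L \cap A$ with $B$; a Hall/Fitting-subgroup analysis of $L$ (for instance $\op{O}_{p'}(L)$, with the case where $L$ is a $p$-group treated separately) then produces the required subgroup and closes the induction.

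The principal obstacle is exactly the two places where one leans on the fine structure of mutually permutable products: excluding that a minimal normal subgroup be a mutually permutable product of two proper subgroups of itself, and the downward transfer of normality from $L$ to a subgroup of $A$ in the prime-order case. Both ultimately rest on the classification of the minimal normal subgroups of a mutually permutable product, which I would import from \cite{BEA} rather than reprove from scratch.
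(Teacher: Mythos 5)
First, a point of orientation: the paper does not prove Theorem~\ref{prodnotrivial} at all — it is imported verbatim from \cite{BH} (and is reproduced, together with the chain of lemmas needed to establish it, in Chapter~4 of \cite{BEA}). So there is no internal proof to compare against, and your argument has to stand entirely on its own. The inductive frame you set up (minimal counterexample, a minimal normal subgroup $N$, the dichotomy $(N\cap A)(N\cap B)\in\{1,N\}$ from Lemma~\ref{mutperm}(b), passage to $G/N$) is reasonable and is in the spirit of how such results are proved, but the two steps you yourself flag as ``the principal obstacle'' are exactly where the content of the theorem lies, and neither is actually carried out.

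Concretely: (i) in the case $(N\cap A)(N\cap B)=N$ with $N=G$, you invoke ``the fact that a simple group is not the mutually permutable product of two proper subgroups''. Since proper subgroups of a simple group automatically have trivial core, this ``fact'' \emph{is} Theorem~\ref{prodnotrivial} for simple $G$; citing it there is circular, and for non-abelian simple (and, when $N<G$, characteristically simple) groups it is precisely the hard part of \cite{BH}. The follow-up sentence for $N<G$ (``combining this with the dual statement for $B$ and with the classification \dots yields a contradiction'') contains no argument. (ii) In the final step you must extract from $L=N(L\cap A)$, $|N|=p$, a non-trivial normal subgroup of $G$ inside $A$, and your plan is to find a non-trivial characteristic subgroup of $L$ contained in $A$. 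This fails outright in the case $[N,L\cap A]=1$ and $|L\cap A|=p$: then $L\cong C_p\times C_p$ has no proper non-trivial characteristic subgroup, and the promised ``Hall/Fitting-subgroup analysis'' is not supplied for this case (nor for $[N,L\cap A]=N$). (iii) More generally, the ``classification of the minimal normal subgroups of a mutually permutable product'' you propose to import from \cite{BEA} is the very machinery developed there in order to prove this theorem; leaning on it without specifying which statement is used, and checking that it does not presuppose the conclusion, leaves the proof essentially where it started. To close the argument you would need either to reproduce the Beidleman--Heineken analysis of core-free mutually permutable products or to quote their specific preliminary lemmas and assemble them explicitly.
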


The following lemma will be essential in the proofs of our theorems.

\begin{lemma} \emph{(\cite[Lemma 2.4]{BCL})}
\label{lemabcl}
Let $p$ be a prime, and $Q$ be a $p'$-group acting faithfully on an elementary abelian $p$-group $N$ with $\abs{[x, N]} = p$, for all $1\neq x\in Q$. Then $Q$ is cyclic.
\end{lemma}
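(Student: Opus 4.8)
The plan is to realise $N$ additively as a faithful $\mathbb{F}_p Q$-module and argue inside $\op{GL}(N)$. Since $Q$ is a $p'$-group, Maschke's theorem gives that every $x\in Q$ acts semisimply, and coprime action yields $N=\ce{N}{x}\oplus[x,N]$, where $\ce{N}{x}=\ker(x-1)$ and $[x,N]=(x-1)N$. The hypothesis $\abs{[x,N]}=p$ for $1\neq x\in Q$ then says precisely that $x-1$ has rank one on $N$; equivalently, $[x,N]$ is a line and $\ce{N}{x}$ a hyperplane. I would also record that $\langle x\rangle$ acts faithfully, so $x$ is nontrivial on the line $[x,N]$. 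The strategy has two parts: \emph{(i)} show that all the lines $[x,N]$, for $1\neq x\in Q$, coincide in a single line $L$; and \emph{(ii)} identify $L$ with $[Q,N]$ and read off the cyclicity from the one-dimensional action of $Q$ on $L$.

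For \emph{(i)} I would split according to whether all nontrivial elements of $Q$ share the \emph{same} fixed hyperplane. If $\ce{N}{x}=W$ for every $1\neq x\in Q$, then $Q=\langle\, x : 1\neq x\in Q\,\rangle$ centralises $W$, so $Q$ acts on the one-dimensional quotient $N/W$ through $\rho\colon Q\to\op{Aut}(N/W)\cong\mathbb{F}_p^{\times}$; a nontrivial element of $\ker\rho$ would act trivially on both $W$ and $N/W$, hence, being semisimple, trivially on $N$, a contradiction. So in this case $Q$ is already cyclic. Otherwise pick $1\neq x_0,y_0\in Q$ with $\ce{N}{x_0}\neq\ce{N}{y_0}$; here comes the one genuine computation. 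Write $(x_0-1)v=f(v)\,a$ and $(y_0-1)v=g(v)\,b$ for all $v\in N$, with $\langle a\rangle=[x_0,N]$, $\langle b\rangle=[y_0,N]$, and $f,g\in N^{\ast}$ linearly independent (their kernels being the two distinct hyperplanes). From $x_0y_0-1=(x_0-1)(y_0-1)+(x_0-1)+(y_0-1)$ one gets $(x_0y_0-1)v=f(v)\,a+g(v)\,(f(b)a+b)$, so $\op{Im}(x_0y_0-1)=\langle a,b\rangle$. But $x_0y_0\neq 1$ (else $\ce{N}{y_0}=\ce{N}{x_0^{-1}}=\ce{N}{x_0}$), so $x_0y_0-1$ has rank one, forcing $\langle a\rangle=\langle b\rangle$, i.e.\ $[x_0,N]=[y_0,N]$. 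Applying this to the pair $(z,x_0)$ or $(z,y_0)$ — whichever has distinct fixed hyperplanes, and for any $1\neq z\in Q$ at least one of the two does — we conclude $[z,N]=[x_0,N]=:L$ for every $1\neq z\in Q$.

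For \emph{(ii)}: $[Q,N]$ is generated by the commutators $[z,n]$ with $z\in Q$, $n\in N$, each lying in $[z,N]=L$, and $[Q,N]\neq 0$ by faithfulness; hence $[Q,N]=L$ is a $Q$-invariant line. Then $Q$ acts on $L$ through $\rho\colon Q\to\op{Aut}(L)\cong\mathbb{F}_p^{\times}$, and if $1\neq x\in\ker\rho$ then $x$ centralises $L=[x,N]$ together with $\ce{N}{x}$, hence centralises $N=\ce{N}{x}\oplus[x,N]$, contradicting faithfulness. So $\rho$ is injective and $Q\cong\rho(Q)\leq\mathbb{F}_p^{\times}$ is cyclic.

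I expect the heart of the matter to be step \emph{(i)}, in particular keeping straight the two invariants attached to a nontrivial element — its fixed hyperplane $\ce{N}{x}$ and its commutator line $[x,N]$ — since the rank-one identity only pins the lines together once the hyperplanes are known to differ; the case split is tailored to cover the complementary situation and, incidentally, the small cases $\abs{Q}\leq 2$. The rest is routine coprime-action bookkeeping (semisimplicity, the decomposition $N=\ce{N}{x}\oplus[x,N]$, rank--nullity).
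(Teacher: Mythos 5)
Your argument is correct. Note that the paper itself offers no proof of this lemma --- it is quoted verbatim from \cite[Lemma 2.4]{BCL} --- so there is no in-paper argument to compare against; judged on its own, your module-theoretic proof is complete and self-contained. The key points all check out: coprime action gives $N=\ce{N}{x}\oplus[x,N]$, so the hypothesis is exactly that each $x-1$ has rank one; the identity $x_0y_0-1=(x_0-1)(y_0-1)+(x_0-1)+(y_0-1)$ together with the linear independence of $f$ and $g$ does show $\op{Im}(x_0y_0-1)=\langle a,b\rangle$, and $x_0y_0\neq1$ (since otherwise the two fixed hyperplanes would coincide) forces $\langle a\rangle=\langle b\rangle$; the complementary case of a common fixed hyperplane $W$ is handled correctly, since an element trivial on both $W$ and $N/W$ is unipotent, hence a $p$-element, hence trivial in the $p'$-group $Q$; and in step (ii) the embedding $Q\hookrightarrow\op{Aut}(L)\cong\mathbb{F}_p^{\times}$ is injective precisely because an element of the kernel would centralise $\ce{N}{x}\oplus[x,N]=N$. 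The only cosmetic remark is that in the common-hyperplane case you conclude cyclicity directly without passing through step (ii), so the two branches of your case split terminate differently; that is harmless, but worth signposting so the reader does not look for a unified endgame.
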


The next result is the first assertion of Theorem A in \cite{QW}, which uses the CFSG.

\begin{theorem}
\label{lemaqwa}
Let $G$ be a group. For a fixed prime $p$ with $\operatorname{gcd}(p - 1, \abs{G}) = 1$, if $p^{2}$ does not divide $\abs{x^{G}}$ for any $p$-regular element $x\in G$ of prime power order, then $G$ is soluble.
\end{theorem}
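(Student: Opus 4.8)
The plan is to split according to the parity of $p$. If $p$ is odd then $p-1$ is even, so $\gcd(p-1,\abs{G})=1$ forces $\abs{G}$ to be odd, and $G$ is soluble by the Feit--Thompson theorem; no hypothesis on class sizes is needed here. From now on I would therefore assume $p=2$, in which case $\gcd(p-1,\abs{G})=1$ holds automatically and the hypothesis reduces to the statement that $4$ does not divide $\abs{G:\ce{G}{x}}$ for any element $x\in G$ of odd prime power order.

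For this case I would argue by contradiction, taking $G$ of minimal order among the counterexamples. First I would observe that the hypothesis is inherited by normal subgroups and by quotients: if $N\trianglelefteq G$ and $x\in N$ has odd prime power order then $\abs{x^{N}}\mid\abs{x^{G}}$ by Lemma~2.1(a), so $4\nmid\abs{x^{N}}$; and if $xN\in G/N$ has odd prime power order then it is a $q$-element for some odd prime $q$, hence (by the evident analogue of Lemma~2.1(c) for $q$) it equals $x_{1}N$ for some $q$-element $x_{1}\in G$, and $\abs{(xN)^{G/N}}=\abs{(x_{1}N)^{G/N}}\mid\abs{x_{1}^{G}}$ by Lemma~2.1(b), so $4\nmid\abs{(xN)^{G/N}}$. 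By minimality every proper normal subgroup and every proper quotient of $G$ is soluble, so a normal subgroup $N$ with $1<N<G$ would make $G$ soluble; hence $G$ is a non-abelian simple group.

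It then remains to reach a contradiction by proving, via the classification of finite simple groups, that \emph{every non-abelian finite simple group $S$ contains an element $x$ of odd prime power order with $4\mid\abs{S:\ce{S}{x}}$}. For alternating groups $A_{n}$ ($n\ge5$) this is a direct computation: a $3$-cycle (or, for the smallest values of $n$, a short odd-order permutation) has a centralizer whose $2$-part is at most $\tfrac14\abs{A_{n}}_{2}$. For groups of Lie type one exhibits a suitable semisimple element of odd prime power order --- such elements exist because $S$ is not a $2$-group, the choice being forced in characteristic $2$ where unipotent elements are $2$-elements --- whose centralizer is, up to bounded index, a reductive subgroup of much smaller order, so that comparing $2$-parts yields $4\mid\abs{S:\ce{S}{x}}$ outside a short list of groups of small rank, which are handled individually; the sporadic groups and the remaining small exceptions are settled by inspection of their conjugacy class data. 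The main obstacle is precisely this case-by-case analysis over the simple groups --- every step before it is elementary --- and it is exactly the content of the proof of \cite[Theorem A]{QW}, of which the present statement is the first assertion.
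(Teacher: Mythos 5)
The paper does not actually prove this statement: it is imported wholesale as the first assertion of \cite[Theorem A]{QW}, so in substance your proposal and the paper end up in the same place, namely a citation of Qian--Wang. Your elementary preprocessing is correct and matches what the paper does elsewhere (in the proof of Theorem B(1)): for odd $p$ the coprimality hypothesis forces $\abs{G}$ to be odd and Feit--Thompson applies, and for $p=2$ the hypothesis passes to normal subgroups and to quotients exactly as you argue, so a minimal counterexample must be a non-abelian simple group and everything reduces to showing that every non-abelian finite simple group contains an element of odd prime power order whose class size is divisible by $4$.

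That last claim is where all the content lies, and your treatment of it is a plausibility sketch rather than a proof --- one that you yourself ultimately refer back to \cite{QW}. Moreover, the one concrete instance you do offer is wrong as stated: in $A_n$ the $2$-part of the index of the centralizer of a $3$-cycle is the $2$-part of $n(n-1)(n-2)$, which equals $2$ whenever $n\equiv 3\pmod 4$, so the class of $3$-cycles then has size not divisible by $4$ (in $A_7$ it has size $70$). This is an infinite family, not just ``the smallest values of $n$'', and one must switch to a different odd-order element there (a $5$-cycle in $A_7$ has class size $504$, for instance). So your argument should be read as a correct reduction of the theorem to the CFSG verification carried out in \cite{QW}, not as an independent proof; since the paper itself only cites that reference, this puts you on the same footing as the paper, provided the alternating-group slip is repaired.
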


Finally, the later lemma, which is a nice result due to Isaacs, will be very useful in the proof of Theorem \ref{teoD}. 

\begin{lemma}\emph{(\cite[4.17]{ISA})}
\label{isaac_lem}
Let $K$ be an abelian normal subgroup of a finite group $G$, and let $x\in G$ be non-central. Then $\abs{\ce{G}{x}}<\abs{\ce{G}{y}}$, where $y=[k, x]$ and $k\in K$ is arbitrary.
\end{lemma}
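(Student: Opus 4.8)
The plan is to derive the inequality from a comparison of two conjugation orbits, both of which turn out to be controlled by the single subgroup $[K,x]$. For a subgroup $H\leqslant G$ and $z\in G$ I write $z^{H}=\{h^{-1}zh:h\in H\}$ for the $H$-orbit of $z$ under conjugation.

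First I would reduce to orbits. Put $L=\ce{G}{x}\cap\ce{G}{y}$. This subgroup is at the same time the stabiliser of $y$ for the conjugation action of $\ce{G}{x}$ and the stabiliser of $x$ for the conjugation action of $\ce{G}{y}$, so the orbit--stabiliser relation gives $\abs{\ce{G}{x}}=\abs{L}\,\abs{y^{\ce{G}{x}}}$ and $\abs{\ce{G}{y}}=\abs{L}\,\abs{x^{\ce{G}{y}}}$. Hence it suffices to prove $\abs{y^{\ce{G}{x}}}<\abs{x^{\ce{G}{y}}}$.

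Next I would bound both orbits by $[K,x]$. Since $K$ is abelian and normal in $G$, the map $a\mapsto[a,x]$ is an endomorphism of $K$ with kernel $\ce{K}{x}$ and image $[K,x]$, so $\abs{x^{K}}=\abs{[K,x]}=[K:\ce{K}{x}]$; and since $y\in K$ is centralised by the abelian group $K$ we have $K\leqslant\ce{G}{y}$, whence $x^{K}\subseteq x^{\ce{G}{y}}$ and $\abs{x^{\ce{G}{y}}}\geqslant\abs{[K,x]}$. On the other side, for each $c\in\ce{G}{x}$ we get $y^{c}=[k,x]^{c}=[k^{c},x^{c}]=[k^{c},x]\in[K,x]$ (as $k^{c}\in K$), so $y^{\ce{G}{x}}\subseteq[K,x]$. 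If $y\neq 1$, then $k\notin\ce{K}{x}$, and since $\ce{K}{x}$ is normalised by $\ce{G}{x}$ (from $a\in\ce{K}{x}$, $c\in\ce{G}{x}$ one gets $[a^{c},x]=[a,x]^{c}=1$, so $a^{c}\in\ce{K}{x}$) we obtain $k^{c}\notin\ce{K}{x}$, i.e.\ $y^{c}=[k^{c},x]\neq 1$, for every $c\in\ce{G}{x}$; thus $1\in[K,x]\setminus y^{\ce{G}{x}}$, the inclusion $y^{\ce{G}{x}}\subseteq[K,x]$ is proper, and $\abs{y^{\ce{G}{x}}}<\abs{[K,x]}\leqslant\abs{x^{\ce{G}{y}}}$. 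The degenerate case $y=1$ is trivial: then $\abs{y^{\ce{G}{x}}}=1$ and $\ce{G}{y}=G$, while $\abs{x^{G}}>1$ because $x\notin\ze{G}$.

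The main obstacle is conceptual rather than computational: one has to hit on the idea of comparing the $\ce{G}{x}$-orbit of $y$ with the $\ce{G}{y}$-orbit of $x$ — this is precisely what makes the common index $\abs{L}$ cancel — and then to realise that the one subgroup $[K,x]$ bounds the former from above and the latter from below. Once that is in place, strictness comes for free, witnessed by the identity element lying in $[K,x]$ but not in $y^{\ce{G}{x}}$ (using the $\ce{G}{x}$-invariance of $\ce{K}{x}$), and only the trivial case $y=1$ requires a separate word. No deep machinery is needed here: neither the classification nor any factorisation hypothesis intervenes, as this is an elementary fact about a single finite group carrying an abelian normal subgroup.
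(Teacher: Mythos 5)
Your proof is correct: the reduction via the common stabiliser $L=\ce{G}{x}\cap\ce{G}{y}$ to the inequality $\abs{y^{\ce{G}{x}}}<\abs{x^{\ce{G}{y}}}$ is sound, the endomorphism $a\mapsto[a,x]$ of the abelian normal subgroup $K$ correctly gives $\abs{x^{K}}=\abs{[K,x]}$, and the strictness argument (the identity lies in $[K,x]$ but not in $y^{\ce{G}{x}}$ when $y\neq 1$, because $\ce{K}{x}$ is $\ce{G}{x}$-invariant) is complete, including the degenerate case $y=1$. Note that the paper offers no proof of this lemma at all — it simply cites Isaacs \cite[4.17]{ISA} — and your argument is essentially the standard one found there, comparing the two conjugation orbits through the single subgroup $[K,x]$.
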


%%%%%%%%%%%%%%%%%%%%%%%%%%%%%%%%%%%%%%%%%%%%%%%%%%%%%%%%%%%%%%%%%%%%%%%%%%%%%%%%%%%%%%

\section{Class sizes not divisible by \texorpdfstring{$p^2$}{p2}, for a fixed prime  \texorpdfstring{$p$}{p}}
\label{sec1}

The well-known Knoche's theorem (see \cite{KNO}) asserts that if $P$ is a $p$-group, $p$ a prime, then the conjugacy class sizes of $P$ are square-free if, and only if, $\abs{P'}\leq p$. We begin this section by proving Theorem \ref{knoche}, which clearly extends it for factorised groups.

\medskip

\begin{proof}[Proof of Theorem \ref{knoche}]
Since $\abs{P:\ce{P}{x}}\leq p$ for each $x\in A\cup B$, it follows $\fra{P}\leqslant\ce{P}{x}$. Therefore, $\fra{P}$ commutes with both $A$ and $B$, so $P' \leqslant \fra{P} \leqslant \ze{P}$. Hence $P/\ze{P}$ is elementary abelian, and $x^{p} \in \ze{P}$ for all $x\in P$. Since $P' \leqslant \ze{P}$ and $[x, y]^p = [x^p, y] = 1$ (see \cite[A - 7.3(a)]{DH}) for any $x, y \in P$, it follows that $P'$ is elementary abelian. Now it remains to prove that $\abs{P'}\leq p^2$.

Let $[x, y]$ be a generator of $P'$. Since $P' \leqslant \ze{P}$ and $y = y_ay_b$ with $y_a \in A$ and $y_b \in B$, then $[x, y] = [x, y_ay_b] = [x, y_b][x, y_a]^{y_b} = [x, y_b][x, y_a] \in [P, B][P, A]$ . Thus $P' = [P, B][P, A]$. Clearly, $[P, B]$ is elementary abelian. Suppose $[P, B]\neq1$, and let $1 \neq [x, z]$ and $1 \neq [x', z']$ be two generators of $[P, B]$, with $x, x' \in P$ and $z, z' \in B$. We distinguish three cases in order to prove that $\langle [x, z]\rangle = \langle [x', z']\rangle$:

i) Suppose first $z, z' \in B\setminus \ze{B}$. Let $b\in B\setminus(\ce{P}{z}\cup\ce{P}{z'})$. Since $\abs{P:\ce{P}{z}} = p$ then $P = \ce{P}{z}\langle x\rangle$. Moreover, $b\notin\ce{P}{z}$ implies that $1\neq [b, z]=[x^{i}t, z] = [x^i, z] = [x, z]^i$, where $b = tx^i$ with $t\in \ce{P}{z}$. On the other hand, $P = \ce{P}{b}\langle z\rangle$ so $z' = z^jk$, with $k\in\ce{P}{b}$. Hence $1\neq[b, z'] = [b, z^jk] = [b, z]^j = [x, z]^{i+j}$. Finally, as $b \in P = \ce{P}{z'}\langle x'\rangle$, then $b = (x')^m s$ with $s\in \ce{P}{z'}$. Therefore $1 \neq [x, z]^{i+j} = [b, z'] = [(x')^m s, z'] = [x', z']^m$, and recall that $[x,z]$ and $[x',z']$ both have order $p$. Thus, $\langle [x, z] \rangle= \langle [x', z'] \rangle$. 

ii) Now suppose $z, z' \in \ze{B}$. Then $B\leqslant \ce{P}{z} \cap \ce{P}{z'}$. There exists $w \in P\setminus (\ce{P}{z}\cup \ce{P}{z'})$. Therefore, $w = w_aw_b$ with $w_a\in A$ and $w_b\in B \leqslant \ce{P}{z} \cap \ce{P}{z'}$, so $w_a\in A\setminus(\ce{P}{z} \cup \ce{P}{z'})$. Arguing analogously as in case i) with $w_a$ instead of $b$, we conclude that $\langle [x, z] \rangle= \langle [x', z'] \rangle$ too.

iii) Finally, suppose $z\in B\setminus \ze{B}$ but $z' \in \ze{B}$. Let $z'' = zz' \in B\setminus\ze{B}$. Therefore, we have $[x', z''] = [x', z'][x', z]$. If $[x', z'']=1$ then $1 \neq [x', z']^{-1} = [x', z]$, and applying  case i) to both $[x, z]$ and $[x', z]$ we conclude that they generate the same cyclic group of order $p$. On the other hand, if $[x', z'']\neq 1$ and $[x', z] =1$, then we apply again the first case. Finally, if both $[x', z''] \neq 1 \neq [x', z]$ then they generate the same cyclic group by case i) again. Thus $1\neq \langle [x', z']\rangle = \langle [x', z''][x', z]^{-1}\rangle \leqslant \langle [x', z'']\rangle$. Since the last one has order $p$, it follows $\langle [x', z']\rangle = \langle [x', z'']\rangle$. So we have $\langle [x', z']\rangle= \langle [x', z'']\rangle$, which is equal to $\langle [x, z]\rangle$ by i) again. 

In conclusion, if $[P,B] \neq 1$, then it has order $p$. Analogously with $[P, A]$. Hence $\abs{P'} =\abs{[P, B][P, A]} \leq p^2$ and this establishes the result.
\end{proof}

\begin{example}
The converse of the above result is not true in general, in contrast to Knoche's theorem. Let $P$  be the group of the Small groups library of \verb+GAP+ with identification number $32\#35$, which is the product of a cyclic group of order $4$ and a quaternion group of order $8$. Then its derived group is $P' = C_2 \times C_2$, and $P' = \fra{P} = \ze{P}$. Nevertheless, there are elements in the quaternion group with conjugacy class size equal to 4.
\end{example}

\begin{example}
Let $G= Q_8 \times D_8$ be the direct product of a quaternion group and a dihedral group of order $8$. Then every element contained in each factor has conjugacy class size equal to either $1$ or $2$, so Theorem \ref{knoche} applies. However, there are elements in $G$ with conjugacy class size divisible by $4$, and Knoche's result cannot be applied.
\end{example}

Now we proceed with a key result in the sequel.

\begin{proposition}
\label{proptec}
Let $G$ be a group, and let $p$ be a prime. Suppose that $N$ is an abelian minimal normal subgroup of $G$, which is a $p'$-group. Then:

\emph{(1)} If $G$ is $p$-nilpotent, and $\abs{x^{G}}$ is not divisible by $p^2$ for every element $x\in N$, then $\abs{\op{O}_{p}(G/N\op{O}_{p}(G))}\leq p$.
	
\emph{(2)} If $K/N\op{O}_{p}(G) = \op{O}_{p}(G/N\op{O}_{p}(G))$ has order $p$, and $P$ is a Sylow $p$-subgroup of $K$, then $\ce{N}{P}=1$.
\end{proposition}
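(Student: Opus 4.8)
The plan is to treat the two assertions separately, in both cases passing to a normal subgroup $K$ of $G$ of the form $N \rtimes P$ with $P$ a Sylow $p$-subgroup of $K$ containing $\op{O}_p(G)$. At the outset I note that $N$, being an abelian minimal normal subgroup, is elementary abelian, say a $q$-group with $q \neq p$, and that $N\op{O}_p(G) = N \times \op{O}_p(G)$, since these are normal subgroups of $G$ of coprime orders, so $[N,\op{O}_p(G)]=1$.

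For \emph{(1)}, I would take $K$ to be the preimage in $G$ of $\op{O}_p(G/N\op{O}_p(G))$. Then $K \trianglelefteq G$ and $K/N$ is a $p$-group (an extension of the $p$-group $N\op{O}_p(G)/N \cong \op{O}_p(G)$ by the $p$-group $K/N\op{O}_p(G)$), so Schur--Zassenhaus gives $K = N \rtimes P$ with $P \in \op{Syl}_p(K)$, $\op{O}_p(G) \leqslant P$, and $P/\op{O}_p(G) \cong \op{O}_p(G/N\op{O}_p(G))$; hence it suffices to show $|P : \op{O}_p(G)| \leq p$. Since $N$ is abelian, for each $x \in N$ we have $\ce{K}{x} = N\,\ce{P}{x}$, so $|x^K| = |P : \ce{P}{x}|$ is the length of the $P$-orbit of $x$; as $K \trianglelefteq G$, this divides $|x^G|$, which is not divisible by $p^2$ by hypothesis, so every $P$-orbit on $N$ has length at most $p$. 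The key point is then a self-contained lemma, which I expect to be the main obstacle: if a $p$-group $\bar P$ acts faithfully on an elementary abelian $q$-group $N$ with $q\neq p$ and all orbits of length at most $p$, then $|\bar P| \leq p$. I would prove it by noting that every point stabiliser, having index at most $p$ in $\bar P$, contains $\fra{\bar P}$, whence $\fra{\bar P} \leqslant \ce{\bar P}{N} = 1$ and $\bar P$ is elementary abelian; then, decomposing $N$ into $\mathbb{F}_q[\bar P]$-irreducible summands $W_1, \dots, W_r$ on each of which the abelian group $\bar P$ acts by scalars, the sum of one nonzero vector from each $W_i$ has trivial stabiliser in $\bar P$, forcing $|\bar P| \leq p$. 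Applying this to $\bar P = P/\ce{P}{N}$ yields $|P : \ce{P}{N}| \leq p$, and it only remains to see $\ce{P}{N} = \op{O}_p(G)$: the inclusion $\op{O}_p(G) \leqslant \ce{P}{N}$ is clear from the above, while conversely $\ce{K}{N} = K \cap \ce{G}{N}$ is normal in $G$ and equals $N \times \ce{P}{N}$, so $\ce{P}{N}$ is characteristic in $\ce{K}{N}$ and hence a normal $p$-subgroup of $G$, giving $\ce{P}{N} \leqslant \op{O}_p(G)$.

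For \emph{(2)}, the same reduction applies to the given $K$, now with $|P : \op{O}_p(G)| = |K : N\op{O}_p(G)| = p$; in particular $P$ is not normal in $G$. To show $\ce{N}{P} = 1$, I would first observe that $\ce{N}{P} \trianglelefteq NP = K$, since $N$ is abelian and $P$ fixes $\ce{N}{P}$ pointwise. Next, for any $g \in G$ the subgroup $P^g$ is again a Sylow $p$-subgroup of $K$ (as $K \trianglelefteq G$), hence $K$-conjugate to $P$, so $\ce{N}{P}^g = \ce{N}{P^g} = \ce{N}{P}$; thus $\ce{N}{P} \trianglelefteq G$, and minimality of $N$ forces $\ce{N}{P} \in \{1, N\}$. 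Were it $N$, then $P$ would centralise $N$, so $K = N \times P$, making $P$ characteristic in $K$ and hence a normal $p$-subgroup of $G$, contrary to $|P : \op{O}_p(G)| = p$. Therefore $\ce{N}{P} = 1$.

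The routine verifications I am suppressing are the Schur--Zassenhaus splittings, the facts that $K/N$ is a $p$-group and that $\op{O}_p(G)$ lies inside the relevant Sylow $p$-subgroup, and the identity $\ce{K}{N} = N \times \ce{P}{N}$; the one genuinely substantive ingredient is the orbit lemma invoked in \emph{(1)}.
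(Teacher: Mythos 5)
Your proof is correct, and for part (1) it takes a genuinely different route from the paper. For part (2) the two arguments are essentially the same: the paper observes that $[K,N]=[P,N]$ is normal in $G$ and applies minimality of $N$ plus coprime action, while you instead show $\ce{N}{P}$ is normal in $G$ by a Frattini-type conjugation argument ($P^g$ is $K$-conjugate to $P$); both reach the dichotomy $\ce{N}{P}\in\{1,N\}$ and kill the second case by forcing $P\leqslant\op{O}_p(G)$. For part (1) the paper argues by minimal counterexample: it reduces to $\op{O}_p(G)=\fra{G}=1$, complements $N$ by Gasch\"utz ($G=NS$), and uses the $p$-nilpotency of $S$ in an essential way to place a minimal normal subgroup $Z$ of $\op{O}_p(S)\cap\ze{P}$ into $\ze{S}$, deduce $\ce{N}{Z}=1$, and finally obtain $P=P_0\times Z$ with $Z=\op{O}_p(S)$ of order $p$. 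You instead work directly inside the preimage $K=N\rtimes P$ of $\op{O}_p(G/N\op{O}_p(G))$, translate the class-size hypothesis into the statement that every $P$-orbit on $N$ has length at most $p$, and finish with a self-contained orbit lemma (Frattini argument to make $\bar P=P/\ce{P}{N}$ elementary abelian, then Maschke plus Schur to produce a regular orbit), identifying $\ce{P}{N}$ with $\op{O}_p(G)$ via the characteristic Sylow $p$-subgroup of $\ce{K}{N}\trianglelefteq G$. This is shorter, avoids the counterexample machinery, and---notably---never uses the $p$-nilpotency of $G$, so it proves a slightly stronger statement. The only imprecision is the phrase ``acts by scalars'' on an irreducible summand $W_i$: for an abelian $p'$-group acting irreducibly over $\mathbb{F}_q$ this means scalars of the endomorphism field of $W_i$, which may be a proper extension of $\mathbb{F}_q$; but the property you actually need---that a non-identity element of $\bar P/\ce{\bar P}{W_i}$ fixes no nonzero vector of $W_i$---is exactly what this gives, so the regular-orbit conclusion, and hence the bound $\abs{\bar P}\leq p$, stands.
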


\begin{proof}
(1) Suppose that the result is not true, and let $G$ be a counterexample of minimal order. Since the hypotheses are inherited by quotients, we may assume by standard arguments that $\op{O}_{p}(G)=1$, and then also $\fra{G}=1$. Since $N$ is abelian, by a Gasch\"utz's result (\cite[4.4]{HUP}) $N$ is complemented, that is, $G = NS$ with $N\cap S = 1$. We may assume that $G/N$ is not a $p'$-group, so $\op{O}_{p}(G/N) \cong \op{O}_{p}(S) \neq 1$ by the minimality of $G$. Let $P$ be a Sylow $p$-subgroup of $S$ (so $P$ is a Sylow $p$-subgroup of $G$). Hence it follows $\op{O}_{p}(S) \cap \operatorname{Z}(P) \neq 1$. Let $Z$ be a minimal normal subgroup of $\op{O}_{p}(S) \cap \operatorname{Z}(P)$. Since $S$ is $p$-nilpotent, we get $S=PL$, where $L$ is normal in $S$ and $P\cap L =1$. It follows that 
$[L, Z] \leqslant [L, \op{O}_{p}(S)]\leqslant L\cap \op{O}_{p}(S) = 1,$
 so $Z\leqslant \operatorname{Z}(S)$. Note that $\operatorname{C}_{N}(Z)$ is normal in $G = SN$. Consequently, by the minimality of $N$, we have either $\operatorname{C}_{N}(Z)=1$ or $\operatorname{C}_{N}(Z)=N$. If $\operatorname{C}_{N}(Z) = N$, then $Z\leqslant\operatorname{Z}(G)$, which implies that $Z\leqslant\op{O}_{p}(G)=1$, a contradiction. So we may affirm $\operatorname{C}_{N}(Z) = 1$, for every minimal normal subgroup $Z$ of $\op{O}_{p}(S) \cap \operatorname{Z}(P)$.

Now let $1\neq x \in N$ such that a Sylow $p$-subgroup of $\operatorname{C}_{G}(x)$, say $P_0$, is contained in $P$, so $P_0=\operatorname{C}_{P}(x)$. By the hypotheses, $\abs{x^{G}}_p = \abs{G:\operatorname{C}_{G}(x)}_p = \abs{P:P_0}$ is not divisible by $p^2$, so it follows either $\abs{P:P_0} = 1$ or $\abs{P:P_0} = p$. The first case yields $P = \operatorname{C}_{P}(x)$ and then $x\in \operatorname{C}_{N}(Z) =1$, a contradiction. Therefore, we may assume that $\abs{P:P_0} = p$, and so $P_0$ is normal in $P$. In addition, since $\op{O}_{p}(S) \cap \operatorname{Z}(P)$ is abelian, by the minimality of $Z$, we have either $P_0\cap Z = 1$ or $P_0\cap Z = Z$. The last case gives $Z\leqslant \operatorname{C}_{P}(x)$, a contradiction again. Hence, $P_0\cap Z = 1$ and it follows that $P = P_0\times Z$ and $\abs{Z}=\abs{P:P_0}=p$. We only need to see that $Z=\op{O}_{p}(S)$ to finish the proof. 

Note that $\operatorname{Z}(P) = \operatorname{Z}(P)\cap P_0Z = Z(\operatorname{Z}(P)\cap P_0),$ so it follows  $\op{Z}(P)\cap \op{O}_{p}(S) = Z(\operatorname{Z}(P)\cap P_0)\cap \op{O}_{p}(S) = Z(\operatorname{Z}(P) \cap P_0 \cap \op{O}_{p}(S)).$
If $\operatorname{Z}(P) \cap P_0 \cap \op{O}_{p}(S) \neq 1$, since it is normal in $\op{O}_{p}(S) \cap \operatorname{Z}(P)$, we can choose a minimal normal subgroup $Z_{1}$ of $\op{O}_{p}(S) \cap \operatorname{Z}(P)$ such that $Z_{1} \leqslant \operatorname{Z}(P) \cap P_0 \cap \op{O}_{p}(S)$. But then $Z_{1} \leqslant P_0 = \operatorname{C}_{P}(x)$, so $x\in\operatorname{C}_{N}(Z_{1})=1$, a contradiction. Therefore, we may assume that $\operatorname{Z}(P) \cap P_0 \cap \op{O}_{p}(S) =1$. On the other hand, we have $\op{O}_{p}(S) = \op{O}_{p}(S)\cap ZP_0 = Z(P_0\cap \op{O}_{p}(S)).$ If $P_0\cap \op{O}_{p}(S)$ is a non-trivial subgroup of $P$, since it is normal in $P$, we have a contradiction with $\operatorname{Z}(P) \cap P_0 \cap \op{O}_{p}(S) = 1$. Consequently we get the final contradiction $Z = \op{O}_{p}(S)$. The first assertion  is then established.

(2) Let  $K/N\op{O}_{p}(G) =  \op{O}_{p}(G/N\op{O}_{p}(G))$,  which has order $p$, and let $P$ be a Sylow $p$-subgroup of $K$. Then $K = PN$. Moreover, $[K, N]$ is normal in $G$ and $[K, N] = [P, N] \leqslant N$ so, by the minimality of $N$, we have either $[P, N] = 1$ or $[P, N] = N$. The first case leads to $K = P\times N$, and then $P\leqslant \op{O}_{p}(G)$, a contradiction. Thus we have $[P, N] = N$, and by coprime action it follows $\ce{N}{P}=1$.
\end{proof}

\medskip  
  
Note that every dihedral group of order $2q$ (for $q$ an odd prime) verifies the hypotheses of the above proposition (take $p=2$).

Theorem \ref{teoA} (3) is indeed an immediate consequence of the next more general result.

\begin{theorem}
\label{teoElementary}
Let $G = AB$ be a soluble group, which is the mutually permutable product of the subgroups $A$ and $B$. Assume that $G$ is $p$-nilpotent for a prime $p$. If $p^{2}$ does not divide $\abs{x^{G}}$ for any $p$-regular element $x\in A\cup B$ of prime power order, then $G/\op{O}_{p}(G)$ has elementary abelian Sylow $p$-subgroups.
\end{theorem}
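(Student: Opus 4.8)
The natural strategy is a minimal-counterexample argument, and the first job is to secure a minimal normal subgroup lying inside one of the factors. Suppose the theorem fails and let $G$ be a counterexample of least order. I would begin by checking that the hypotheses are inherited by quotients $G/N$: mutual permutability holds by Lemma \ref{mutperm}(a); solubility and $p$-nilpotency are clear; and a $p$-regular element of $(AN/N)\cup(BN/N)$ of prime power order $q^{a}$ is the image of a $q$-element of $A\cup B$ (replace a preimage inside the relevant factor by its $q$-part, the $q'$-part mapping to the identity), whose $G$-class size — hence also that of its image — is not divisible by $p^{2}$. Since $\op{O}_{p}(G/\op{O}_{p}(G))=1$, applying minimality to $G/\op{O}_{p}(G)$ forces $\op{O}_{p}(G)=1$; in particular $G\neq1$, so by Theorem \ref{prodnotrivial} we may assume $A_{G}\neq1$ and choose a minimal normal subgroup $Z$ of $G$ with $Z\leqslant A_{G}\leqslant A$. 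As $G$ is soluble and $\op{O}_{p}(G)=1$, $Z$ is an elementary abelian $q$-group with $q\neq p$.

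The plan is then to play the two quotients $G/Z$ and $G/K$ against each other. Since $Z\leqslant A$, every element of $Z$ is a $p$-regular, prime power order element of $A$, so $p^{2}\nmid\abs{x^{G}}$ for all $x\in Z$, and Proposition \ref{proptec}(1) gives $\abs{\op{O}_{p}(G/Z)}\leq p$. Applying minimality to $G/Z$ (smaller, and satisfying the hypotheses), $(G/Z)/\op{O}_{p}(G/Z)$ has elementary abelian Sylow $p$-subgroups. If $\op{O}_{p}(G/Z)=1$, then, $Z$ being a $p'$-group, the Sylow $p$-subgroups of $G$ are isomorphic to those of $G/Z$ and hence elementary abelian, contradicting the choice of $G$. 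So $\abs{\op{O}_{p}(G/Z)}=p$; write $K/Z=\op{O}_{p}(G/Z)$, so that $K\trianglelefteq G$ and $G/K$ has elementary abelian Sylow $p$-subgroups.

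Next comes the local analysis. Fix $1\neq x\in Z$ and pick a Sylow $p$-subgroup $P$ of $G$ containing a Sylow $p$-subgroup of $\ce{G}{x}$, so that $\abs{P:\ce{P}{x}}=\abs{x^{G}}_{p}\leq p$. Put $Q=P\cap K$; it is a Sylow $p$-subgroup of $K$, hence $\abs{Q}=p$, and since $K\trianglelefteq G$ we get $Q\trianglelefteq P$, while $\abs{\op{Aut}(Q)}=p-1$ being prime to $p$ forces $Q\leqslant\ze{P}$. Moreover $P/Q\cong PK/K$ is a Sylow $p$-subgroup of $G/K$, hence elementary abelian, so $\fra{P}\leqslant Q$. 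By Proposition \ref{proptec}(2), $\ce{Z}{Q}=1$; consequently $Q\not\leqslant\ce{P}{x}$ (otherwise $x\in\ce{Z}{Q}=1$), so $Q\cap\ce{P}{x}=1$, and likewise $\ce{P}{x}\neq P$. Therefore $\abs{P:\ce{P}{x}}=p$ and, $Q$ being central of order $p$ and meeting $\ce{P}{x}$ trivially, $P=Q\times\ce{P}{x}$. Then $\ce{P}{x}\cong P/Q$ is elementary abelian, so $\fra{P}=\fra{Q}\times\fra{\ce{P}{x}}=1$, i.e.\ $P$ is elementary abelian — contradicting the choice of $G$, which finishes the proof.

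The main obstacle is conceptual rather than computational: the class-size condition bears only on elements of $A\cup B$, so one cannot simply feed Proposition \ref{proptec} an arbitrary minimal normal subgroup. Circumventing this is exactly the role of Theorem \ref{prodnotrivial}, which — once we have arranged $\op{O}_{p}(G)=1$ — guarantees a minimal normal subgroup inside a factor; after that it is routine to convert $p^{2}\nmid\abs{x^{G}}$ into $\abs{P:\ce{P}{x}}\leq p$ for a well-chosen Sylow $p$-subgroup and to combine the two parts of Proposition \ref{proptec} into the decomposition $P=Q\times\ce{P}{x}$.
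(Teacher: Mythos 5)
Your proof is correct and follows essentially the same route as the paper's: a minimal counterexample with $\op{O}_{p}(G)=1$, a minimal normal subgroup $Z\leqslant A$ supplied by Theorem \ref{prodnotrivial}, both parts of Proposition \ref{proptec}, and the decomposition of a Sylow $p$-subgroup as a direct product of a central subgroup of order $p$ with an elementary abelian centralizer. The only cosmetic difference is that the paper invokes Gasch\"utz's theorem to split off a complement $S$ of the minimal normal subgroup and works with $\op{O}_{p}(S)$, whereas you work directly with $Q=P\cap K$ for $K$ the preimage of $\op{O}_{p}(G/Z)$, which is, if anything, slightly cleaner.
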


\begin{proof}
Suppose that the result is false and let $G$ be a minimal counterexample. We may assume by the minimality of $G$ that $\op{O}_{p}(G)=1$, and therefore $\fra{G}=1$ too. By Theorem \ref{prodnotrivial}, we can assume that there exists a minimal normal subgroup $N$ of $G$ such that $N\leqslant A$. Moreover, $N$ is $q$-elementary abelian, for some prime $q\neq p$. Furthermore, since $N\cap \fra{G}=1$, by Gasch\"utz's lemma we may write $G = SN$, with $S\cap N = 1$. Let $P$ be a Sylow $p$-subgroup of $S$ (so it is a Sylow $p$-subgroup of $G$). Let $T=\op{O}_p(S)$. By the minimality of $G$ we have $T \cong \op{O}_{p}(G/N) \neq 1$, and by Proposition \ref{proptec} (1)  it holds $\abs{T}=p$. We may choose $1 \neq x \in N$ such that $ P_{0}=\op{C}_{P}(x)$ is a Sylow $p$-subgroup of  $\ce{G}{x}$. Since $\ce{N}{T}=1$ by Proposition \ref{proptec} (2), it holds that $P_{0} \neq P$, $\abs{P:P_0}=p$, and $P_{0} \cap T=1$. Hence $P = P_{0} \times T$. Finally, since $ P_0 \cong P/T \cong (PN/N)/\op{O}_{p}(G/N),$ which is elementary abelian by the minimality of $G$, it follows that $P$ so is, and this leads to the final contradiction. 
\end{proof}

\medskip
  
\begin{proof}[Proof of Theorem \ref{teoA}]
Note that the quotients of $G$ satisfy the hypotheses. Moreover, if $N$ is a normal subgroup of $G$ such that $N= (N \cap A)(N \cap B)$, then $N$ also inherits the hypotheses. (Observe that this occurs, for instance, if either $N \leqslant A$ or $N \leqslant B$.)

(1) We first see that $G$ is soluble by induction over $\abs{G}$. Since every group of odd order is soluble, we may affirm that $p=2$ because $\op{gcd}(p-1, \abs{G})=1$. By Theorem \ref{prodnotrivial}, we can assume that there exists a normal subgroup $M$ of $G$ such that $1\neq M\leqslant A$. If $M < G$, then $M$ is soluble by minimality. Analogously $G/M$ is also soluble, and then so is $G$. If $M=G$, we apply Theorem \ref{lemaqwa}.

(2) Suppose that the result is false and let $G$ be a counterexample of minimal order. Since the quotients of $G$ inherit the hypotheses, the class of $p$-nilpotent groups is a saturated formation, and $G$ is soluble, we may assume that $G$ possesses a unique minimal normal subgroup $N$, with $N = \ce{G}{N} = \fit{G}$. If  $N$ is a $p'$-group, since $G/N$ is $p$-nilpotent by the minimality of $G$, it follows that $G$ is $p$-nilpotent, which is a contradiction. Thus, we may assume that $N=\op{O}_p(G)$. By Theorem \ref{prodnotrivial}, we can assume without loss of generality that $N\leqslant A$, and that there exists a minimal normal subgroup $E/N$ of $G/N$ such that either $E/N \leq A/N$ or $E/N \leqslant BN/N$. In the first case, we have $E\leqslant A$. In the second case, it follows  $E = E\cap BN = N(E\cap B) \leqslant (E\cap A)(E\cap B) \leqslant E.$  Therefore, we have $E = (E\cap A)(E\cap B)$, where the factors are mutually permutable by Lemma \ref{mutperm} (b). In both cases, $E$ is normal in $G$ and $E$ verifies the hypotheses. Hence, if $E< G$, then $E$ is $p$-nilpotent by the minimality of $G$. Since $N=\op{O}_{p}(G)$, we get that $E/N$ is $q$-elementary abelian for some prime $q\neq p$, so it follows that $E=QN$, with $Q$ the normal Sylow $q$-subgroup of $E$. Hence, $Q$ is normal in $G$ which implies that $E=N$,  a contradiction.  

Therefore, we can assume that $E=G$. So we have $G = E = NQ$, where $Q$ is an abelian Sylow $q$-subgroup of $G$. By Lemma \ref{1.3.2}, we may assume that $Q = (Q\cap A)(Q\cap B)$, with either $Q\cap A \neq 1$ or $Q\cap B \neq 1$. Suppose first that $Q\cap B \neq 1$, and take $1\neq x\in Q\cap B$. Let $E_{1} = \langle x\rangle N $, which is normal in $QN = G$. Hence, we have $E_{1} = \langle x \rangle N \leqslant (E_{1}\cap B)(E_{1}\cap A) \leqslant E_{1}.$  If $Q\cap B=1$, then $G=A$ and we can choose $1 \neq x \in Q$, so that  $E_{1} = \langle x\rangle N$ is normal in $QN = G$. Thus, in both cases, we have that $E_{1}$ inherits the hypotheses and, if $E_{1} < G$, it follows that it is $p$-nilpotent. Therefore $\langle x \rangle$ is a normal Sylow $q$-subgroup of $E_1$, which is again a contradiction.
Consequently, we may assume that $G=E_{1}=\langle x \rangle N$, for some  $q$-element $x$. 

Note that $\operatorname{C}_{N}(x)$ is normal in $G = \langle x \rangle N$, since $N$ is abelian. By the minimality of $N$, it follows that either $\operatorname{C}_{N}(x) = 1$ or $\operatorname{C}_{N}(x) = N$. The second case leads to $x\in\operatorname{C}_{G}(N) = N$, a contradiction. Hence, it follows that $\operatorname{C}_{G}(x) = \operatorname{C}_{G}(x)\cap N\langle x\rangle  = \langle x \rangle\operatorname{C}_{N}(x) = \langle x\rangle.$ Then $\abs{x^{G}} = \abs{G:\operatorname{C}_{G}(x)} = \abs{N\langle x\rangle:\langle x \rangle} = \abs{N},$ and so $\abs{N}=p$,  by the hypotheses. Now, we get that $\langle x\rangle \cong G/N = \operatorname{N}_{G}(N)/\operatorname{C}_{G}(N)$ is isomorphic to a subgroup of $\operatorname{Aut}(N) \cong C_{p-1}$, the cyclic group of order $p-1$. Hence, $\abs{\langle x \rangle}$ divides both $p-1$ and $\abs{G}$, which contradicts the fact that $\operatorname{gcd}(p-1, \abs{G})=1$. This finishes the proof of the $p$-nilpotency of $G$.

(3) It follows from Theorem \ref{teoElementary}.
\end{proof}

\medskip

In the particular case when $G=A=B$ we recover:

\begin{corollary}\emph{(\cite[Theorem A]{QW})}
\label{CorTeoAQW}
Let $G$ be a  group. For a fixed prime $p$ with $\operatorname{gcd}(p - 1, \abs{G}) = 1$, if $p^{2}$ does not divide $\abs{x^{G}}$ for any $p$-regular element $x\in G$ of prime power order, then $G$ is soluble, $p$-nilpotent and $G/\op{O}_{p}(G)$ has elementary abelian Sylow $p$-subgroups.
\end{corollary}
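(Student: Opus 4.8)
The plan is to obtain this statement as the degenerate case $A=B=G$ of Theorem~\ref{teoA}. First I would record the trivial observation that $G=G\cdot G$ is a mutually permutable product: for any subgroup $H\leqslant G$ one has $GH=G=HG$, so $G$ permutes with every subgroup of itself, and the condition is of course symmetric. Consequently, taking $A=B=G$, the hypothesis that $p^{2}\nmid\abs{x^{G}}$ for every $p$-regular element $x\in A\cup B=G$ of prime power order, together with $\gcd(p-1,\abs{G})=1$, is precisely the hypothesis of the present corollary.

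Then I would simply invoke the three conclusions of Theorem~\ref{teoA} in this situation: part~(1) yields that $G$ is soluble, part~(2) that $G$ is $p$-nilpotent, and part~(3) that the Sylow $p$-subgroups of $G/\op{O}_{p}(G)$ are elementary abelian. Since these coincide verbatim with the three assertions of the corollary, no further argument is required.

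As for where the genuine difficulty lies: there is no real obstacle in deducing the corollary, but it should be stressed that Theorem~\ref{teoA} is itself not elementary, so the content here is exactly that our factorised result recovers (and in fact generalises) the theorem of Qian and Wang. The solubility in Theorem~\ref{teoA} relies on Theorem~\ref{lemaqwa}, the solubility part of Qian--Wang's Theorem~A, which uses the CFSG; the $p$-nilpotency is proved by a minimal-counterexample reduction, via Theorem~\ref{prodnotrivial} and Lemma~\ref{1.3.2}, to the case $G=\langle x\rangle N$ with $N$ an abelian minimal normal $p$-subgroup and $x$ a $q$-element, where the class-size bound forces $\abs{N}=p$ and then $G/N$ embeds in $C_{p-1}$, contradicting $\gcd(p-1,\abs{G})=1$; and part~(3) descends from Theorem~\ref{teoElementary} through Proposition~\ref{proptec}. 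Thus the corollary is a clean specialisation, and the only point to check is the (immediate) mutual permutability of $G=G\cdot G$.
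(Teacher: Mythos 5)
Your proposal is correct and coincides with the paper's own treatment: the corollary is stated there precisely as the specialisation $G=A=B$ of Theorem~\ref{teoA}, with the mutual permutability of $G=G\cdot G$ being immediate. Nothing further is required.
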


Note that if $G$ is the direct product of two symmetric groups of degree 3, then $G$ verifies the hypotheses of Theorem \ref{teoA} for $p=2$, but not those of Corollary \ref{CorTeoAQW}. Moreover, the asumption that $\op{gcd}(p-1, \abs{G})=1$ is necessary, which can be seen by considering $G = A_5$, the alternating group of degree 5, and the prime $p=3$.

We include here a theorem due to Cossey and Wang \cite{CW}, which was the initial motivation for our results, to notify a gap that we have found in one of the statements. 

\begin{theorem}\emph{(\cite[Theorem 1]{CW})}
\label{teo1CW}
Let $G$ be a finite group, and $p$ be a prime divisor of $\abs{G}$ such that if $q$ is any prime divisor of $\abs{G}$, then $q$ does not divide $p-1$. Suppose that no conjugacy class size of $G$ is divisible by $p^2$. Then $G$ is a soluble $p$-nilpotent group, and $G/\op{O}_{p}(G)$ has a Sylow $p$-subgroup of order at most $p$. Further, if $P$
is a Sylow $p$-subgroup of G, then $P'$ has order at most $p$, and if $P \neq \op{O}_{p}(G)$, then $\op{O}_{p}(G)$ is abelian.
\end{theorem}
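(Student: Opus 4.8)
Since the hypothesis that no conjugacy class size of $G$ is divisible by $p^{2}$ is inherited by subgroups and quotients and, in particular, is stronger than the hypotheses of Theorem~\ref{lemaqwa} and Corollary~\ref{CorTeoAQW} --- the condition ``$q\nmid p-1$ for every prime divisor $q$ of $\abs{G}$'' being exactly $\op{gcd}(p-1,\abs{G})=1$ --- the plan is to invoke these two results first: together they give at once that $G$ is soluble and $p$-nilpotent and that the Sylow $p$-subgroups of $G/\op{O}_{p}(G)$ are elementary abelian. (If one prefers not to appeal to Corollary~\ref{CorTeoAQW}, the $p$-nilpotency and this last assertion can be reobtained by a minimal counterexample argument in the style of the proofs of Theorems~\ref{teoElementary} and~\ref{teoA}: reduce to $\op{O}_{p}(G)=\fra{G}=1$, complement a minimal normal subgroup by Gasch\"utz, and apply Proposition~\ref{proptec}. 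Solubility, however, seems to require the CFSG-based Theorem~\ref{lemaqwa} in any case.)

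Next I would bound $P'$, where $P$ is a Sylow $p$-subgroup of $G$. Using $p$-nilpotency, write $G=R\rtimes P$ with $R=\op{O}_{p'}(G)$ the normal $p$-complement. The key point is that for each $p$-element $x\in P$ the coprime action of $\langle x\rangle$ on $R$ forces $\ce{G}{x}=\ce{R}{x}\,\ce{P}{x}$, hence $\abs{x^{G}}=\abs{x^{R}}\,\abs{x^{P}}$ with $\abs{x^{R}}$ a $p'$-number; since $p^{2}\nmid\abs{x^{G}}$, this gives $\abs{x^{P}}\le p$. Thus every conjugacy class of the $p$-group $P$ has square-free size, and Knoche's theorem~\cite{KNO} yields $\abs{P'}\le p$ (so in fact $P'\le\ze{P}$).

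For the statement on $D:=\op{O}_{p}(G)$, note that $D\le P$ gives $\abs{D'}\le\abs{P'}\le p$, and if $D$ is non-abelian then $D'=P'$ is of order $p$; being normal in $G$, it makes $G/\ce{G}{D'}$ embed into $\op{Aut}(C_{p})\cong C_{p-1}$, so $\op{gcd}(p-1,\abs{G})=1$ forces $D'\le\ze{G}$. On the other hand, $P\ne D$ is equivalent to $[R,P]\ne 1$ (because $R\trianglelefteq G$, $G=RP$ and $R\cap P=1$), i.e.\ to $P$ acting non-trivially by conjugation on $R$, whereas $D$ centralizes $R$ since $[D,R]\le D\cap R=1$. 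I would then choose a minimal normal subgroup $N\le R$ of $G$, pass to $G/D$, and combine Proposition~\ref{proptec} (applicable since $p^{2}\nmid\abs{x^{G}}$ for all $x\in N$) with coprime-action estimates in the spirit of Lemma~\ref{lemabcl} to derive a contradiction from the assumption that $D$ is non-abelian while $P\ne D$, thereby showing $D$ is abelian whenever $P\ne\op{O}_{p}(G)$. This last coprime-action bookkeeping is the technically fiddly part, but I expect it to go through.

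The step I expect to be the genuine obstacle is the claim that a Sylow $p$-subgroup of $G/\op{O}_{p}(G)$ has order at most $p$. All of the above --- and Corollary~\ref{CorTeoAQW} and Proposition~\ref{proptec} --- delivers only that this Sylow subgroup is \emph{elementary abelian}, with no control on its order beyond that; this is exactly the gap in \cite[Theorem~1]{CW} recorded in Remark~\ref{remark1}(a), so the correct target at this point is merely ``elementary abelian''.
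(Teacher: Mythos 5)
First, a structural point: the paper does not prove this statement at all. Theorem~\ref{teo1CW} is quoted verbatim from \cite{CW} precisely in order to record, in Remark~\ref{remark1}(a), that one of its clauses is \emph{false}: ``$G/\op{O}_{p}(G)$ has a Sylow $p$-subgroup of order at most $p$'' fails for the group $300\#25=[C_5\times C_5](\op{Sym}(3)\times C_2)$ with $p=2$, where $\op{O}_{2}(G)=1$ and $\abs{G}_2=4$. You identified exactly this clause as the irreparable one and proposed the correct weakening (``elementary abelian''), which is precisely the paper's point. Your derivation of the remaining first assertions is sound and non-circular: Corollary~\ref{CorTeoAQW} is obtained in the paper from Theorem~\ref{teoA}, independently of the quoted result, and your translation of ``$q\nmid p-1$ for all prime divisors $q$ of $\abs{G}$'' into $\op{gcd}(p-1,\abs{G})=1$ is correct. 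The bound $\abs{P'}\le p$ is also correct: for a $p$-element $x$ one has $\ce{G}{x}=\ce{R}{x}\ce{P}{x}$ (Dedekind plus the fact that $\ce{G}{x}/\ce{R}{x}$ is a $p$-group of order at most $\abs{\ce{P}{x}}$), so $\abs{x^{P}}=\abs{x^{G}}_{p}\le p$ and Knoche's theorem \cite{KNO} applies to $P$.

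The genuine gap is the last clause. Your treatment of ``$P\ne\op{O}_{p}(G)$ implies $\op{O}_{p}(G)$ abelian'' is a declaration of intent (``I expect it to go through''), and the route you sketch --- minimal normal subgroups, Proposition~\ref{proptec}, Lemma~\ref{lemabcl} --- is not obviously the right toolkit. A direct argument closes it without that machinery. Write $D=\op{O}_{p}(G)=\ce{P}{R}$, with $R$ the normal $p$-complement, and suppose $D$ is non-abelian. Since $P\ne D$ we have $\ce{R}{P}<R$, and since a finite group is never the union of the conjugates of a proper subgroup, there exists $1\ne r\in R$ centralised by no Sylow $p$-subgroup of $G$, i.e.\ with $\abs{\ce{G}{r}}_{p}=\abs{P}/p$. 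Pick $a\in D\setminus\ze{D}$. Then $a$ and $r$ commute and have coprime orders, so $\ce{G}{ar}=\ce{G}{a}\cap\ce{G}{r}$, and $p^{2}\nmid\abs{(ar)^{G}}$ forces $\abs{\ce{G}{ar}}_{p}\ge\abs{P}/p=\abs{\ce{G}{r}}_{p}$. Hence a Sylow $p$-subgroup $T$ of $\ce{G}{ar}$ is already a Sylow $p$-subgroup of $\ce{G}{r}$, so it contains $D$ (a normal $p$-subgroup of $\ce{G}{r}$, since $[D,R]=1$ and $D\trianglelefteq G$); but $T\le\ce{G}{a}$, so $D$ centralises $a$ and $a\in\ze{D}$, a contradiction. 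You should replace your sketch by an argument of this kind (or by the one in \cite{CW}); as written, that clause of the theorem remains unproved.
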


\begin{remark}
\label{remark1}
(a) The statement ``$G/\op{O}_{p}(G)$ has a Sylow $p$-subgroup of order at most $p$'' in the above theorem (and so the corresponding one in \cite[Theorem 6]{LWW}) is not true. 
	
To see this, consider the semidirect product $G = [C_5 \times C_5](\op{Sym}(3) \times C_2)$ (where $\op{Sym}(3)$ is a symmetric group of degree 3), which is the group of the Small groups library of \verb+GAP+ with identification number $300\#25$, and the prime $p=2$. Then $G$ verifies the hypotheses of Theorem \ref{teo1CW} but $\op{O}_{2}(G) =1$ and $\abs{G}_{2}=4$. We reveal that this example has been communicated to us by John Cossey.

(b) The same example shows that the hypotheses in Theorem \ref{teoA} for the conjugacy class sizes of the elements $x\in A\cup B$ are not necessarily inherited by the factors, unless they are (sub)normal in $G$. The above group $G$ can be factorised as the mutually permutable product of $A = D_{10} \times D_{10}$ and $B= [C_5 \times C_5]C_3$ (we checked this using \verb+GAP+). It is clear that $G=AB$ satisfies the hypotheses of Theorem \ref{teoA} for $p=2$, but there are elements $x\in A$ with $\abs{x^A}$ divisible by $4$.
\end{remark}

\begin{remark}
A natural question is how to extend the last assertion of Theorem \ref{teo1CW} for (mutually permutable) products. Concerning this, we show the following example:

Let $A=D_8$ be a dihedral group of order 8 and $B=[C_5]C_4 = \langle a, b \: \mid \: a^5=b^4=1, \: a^{b}=a^4\rangle$, and consider the prime $p=2$. Then $G = A\times B$ is a mutually permutable product of $A$ and $B$, and $G$ is $2$-nilpotent. Moreover, $4$ does not divide any conjugacy class size of elements in $A\cup B$. However, $\op{O}_{2}(G) = (\op{O}_{2}(G)\cap A)(\op{O}_{2}(G) \cap B) = D_8 \times C_2$ is not abelian.
\end{remark}

Regarding the claim ``$P'$ has order at most $p$'' in Theorem \ref{teo1CW}, we get the next extension for factorised groups, as an immediate consequence of Theorem \ref{knoche}:

\begin{corollary}
\label{corP'p-nilp}
Let $G = AB$ be the product of the subgroups $A$ and $B$. Assume that $G$ is $p$-nilpotent, and that for all $p$-elements in the factors, $p^2$ does not divide $\abs{x^{G}}$. If $P$ is a Sylow $p$-subgroup of $G$, then $P' \leqslant \fra{P} \leqslant \ze{P}$, with $P'$ elementary abelian of order at most $p^2$.
\end{corollary}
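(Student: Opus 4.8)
\textbf{Proof proposal for Corollary \ref{corP'p-nilp}.}

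The plan is to reduce the statement to a single application of Theorem \ref{knoche} to a well-chosen Sylow $p$-subgroup of $G$. Since any two Sylow $p$-subgroups of $G$ are conjugate and the three assertions ($P'\leqslant\fra{P}\leqslant\ze{P}$, $P'$ elementary abelian, $\abs{P'}\leq p^{2}$) are invariant under conjugation, it suffices to prove them for one such subgroup. First I would invoke Lemma \ref{1.3.2} with $\pi=\{p\}$ (every group is a D$_{p}$-group) to fix a Sylow $p$-subgroup $P$ of $G$ with $P=(P\cap A)(P\cap B)$, where $P\cap A$ is a Sylow $p$-subgroup of $A$ and $P\cap B$ is a Sylow $p$-subgroup of $B$. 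Every $x\in(P\cap A)\cup(P\cap B)$ is then a $p$-element belonging to $A$ or to $B$, so the hypothesis applies to it and $p^{2}$ does not divide $\abs{x^{G}}$.

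The crucial step is to transfer this information from $\abs{x^{G}}$ to $\abs{x^{P}}$, and this is exactly where $p$-nilpotency enters. Writing $G=KP$ with $K=\op{O}_{p'}(G)$ the normal $p$-complement (so $K\cap P=1$), I would show that $\ce{G}{x}=\ce{P}{x}\ce{K}{x}$ for every $p$-element $x\in P$: if $g=uk$ with $u\in P$ and $k\in K$, then a direct computation using $K\trianglelefteq G$ together with the uniqueness of the factorisation of an element of $G$ as (element of $P$)(element of $K$) shows that $g\in\ce{G}{x}$ forces $u\in\ce{P}{x}$ and $k\in\ce{K}{x}$. Since $\ce{K}{x}$ is a $p'$-group meeting the $p$-group $\ce{P}{x}$ trivially, this gives $\ce{P}{x}\in\op{Syl}_{p}(\ce{G}{x})$, hence $\abs{x^{P}}=\abs{P:\ce{P}{x}}=\abs{G:\ce{G}{x}}_{p}=\abs{x^{G}}_{p}$. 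Combined with the previous paragraph, $p^{2}$ does not divide $\abs{x^{P}}$ for every $x\in(P\cap A)\cup(P\cap B)$.

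Finally I would apply Theorem \ref{knoche} to the $p$-group $P=(P\cap A)(P\cap B)$, with $P\cap A$ and $P\cap B$ in the roles of the factors, obtaining immediately $P'\leqslant\fra{P}\leqslant\ze{P}$, $P'$ elementary abelian, and $\abs{P'}\leq p^{2}$, as desired.

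I expect the only genuinely delicate point to be the identity $\abs{x^{P}}=\abs{x^{G}}_{p}$, i.e. the claim that $\ce{P}{x}$ is a Sylow $p$-subgroup of $\ce{G}{x}$ for the relevant $p$-elements $x$. This is false for arbitrary groups — for instance, in $\op{Sym}(4)$ with $p=2$ a double transposition lies in every Sylow $2$-subgroup while its centralizer is one fixed Sylow $2$-subgroup, so $\ce{P}{x}$ is a proper subgroup of $\ce{G}{x}$ for the other choices of $P$ — so the $p$-nilpotency hypothesis is used precisely here; the remaining ingredients are merely the choice of a factorised Sylow $p$-subgroup via Lemma \ref{1.3.2} and the invocation of Theorem \ref{knoche}.
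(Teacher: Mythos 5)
Your proposal is correct and follows the route the paper intends when it calls the corollary an immediate consequence of Theorem \ref{knoche}: fix a factorised Sylow $p$-subgroup $P=(P\cap A)(P\cap B)$ via Lemma \ref{1.3.2}, use $p$-nilpotency to see that $p^2\nmid\abs{x^P}$ for $x\in(P\cap A)\cup(P\cap B)$, and apply Theorem \ref{knoche}; your centralizer factorisation $\ce{G}{x}=\ce{P}{x}\ce{K}{x}$ is a valid (indeed slightly stronger) way to get this, although the divisibility $\abs{x^P}\mid\abs{x^G}$ also drops out at once from the isomorphism $P\cong G/\op{O}_{p'}(G)$ together with Lemma 2.1(b).
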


In particular, from this fact and Theorem \ref{teoA}, we get \cite[Theorem 7]{LWW} as a corollary, taking $G=A=B$.

Finally, we prove Theorem \ref{teoB}, which is motivated by \cite[Theorem 1.3]{BCL}.

\medskip

\begin{proof}[Proof of Theorem \ref{teoB}]
Suppose that the result is false and let $G$ be a counterexample of minimal order. Note that $G$ cannot be simple. Since the class of $p$-supersoluble groups is a saturated formation, we may assume that there exists a unique minimal normal subgroup $N$ of $G$, and that $\fra{G} = 1$. By the minimality of $G$, we get that $G/N$ is $p$-supersoluble. Since $G$ is $p$-soluble, it follows that $N$ is either a $p$-group or a $p'$-group. In the second case, since $G/N$ verifies the thesis by minimality, we get a contradiction. Consequently, we may assume that $N$ is $p$-elementary abelian and we must show that $\abs{N}=p$. As $\fra{G}=1$ and $G$ is $p$-soluble with $\op{O}_{p'}(G)=1$, by \cite[A - 10.6]{DH} it follows that $\fit{G}=\op{Soc}(G) = N = \ce{G}{N}$, and also $N=\op{O}_{p}(G)$. Applying Theorem \ref{prodnotrivial}, we may assume that there exists a minimal normal subgroup $Z/N$ of $G/N$ such that $Z/N \leqslant AN/N$, so $Z=Z\cap AN=N(Z\cap A).$ Since $G/N$ is $p$-soluble, it follows that $Z/N$ is either a $p$-group or a $p'$-group. The first case leads to $Z/N\leqslant\op{O}_{p}(G/N)=\op{O}_{p}(G/\op{O}_{p}(G))=1$, a contradiction. Hence, we may assume that $Z/N$ is a $p'$-group. 

Let $Q$ be a Sylow $q$-subgroup of $Z\cap A$, where $q\neq p$ is a prime (so $Q$ is a Sylow $q$-subgroup of $Z$). Therefore $Q \cong QN/N$ is a Sylow $q$-subgroup of $Z/N$, which acts faithfully on $N$. If $1\neq a \in Q \leqslant A$, then $N = [N, a] \times \operatorname{C}_{N}(a)$. By the hypotheses, since $p^2$ does not divide $\abs{a^{G}} = \abs{G:\operatorname{C}_{G}(a)}$, then neither divides $\abs{N:\operatorname{C}_{N}(a)} = \abs{[N, a]}$, so either $\abs{[N, a]} = 1$ or $\abs{[N, a]} = p$. The first case leads to $a\in\operatorname{C}_{G}(N) = N$, a contradiction. Thus, Lemma \ref{lemabcl} yields $QN/N$ is cyclic. Since this is valid for all primes $q\neq p$, we get by \cite[5.15]{ISA} that $Z/N$ is soluble. By the minimality of $Z/N$, it follows that $(Z/N)' =1$ and $Z/N$ is abelian with cyclic Sylow subgroups. Consequently $Z/N = \langle xN \rangle$, where $x\notin N$ and the order of $xN$ is $q$, for some prime $q\neq p$. 

We may assume that $x\in Z \cap A$ and $Z = N\langle x\rangle$. Hence $\operatorname{C}_{N}(x)=\operatorname{C}_{N}(Z)$. By the minimality of $N$, we have either $\operatorname{C}_{N}(Z) = N$ or $\operatorname{C}_{N}(Z)=1$. The first case leads to $x\in Z\leqslant \operatorname{C}_{G}(N)=N$, a contradiction. Therefore, since $N=[N, x]\times \operatorname{C}_{N}(x)$, it follows $\abs{N} = \abs{[N, x]} =p$, and this final contradiction establishes the theorem.
\end{proof}

\begin{example}
Let $G$ be the symmetric group of degree $4$. Then $G =AB$ is a mutually permutable product, where $A$ denotes the alternating group of degree 4 and $B$ is a Sylow 2-subgroup of $G$, which verifies the hypotheses of  Theorem \ref{teoB}, for $p=3$.
\end{example}

%%%%%%%%%%%%%%%%%%%%%%%%%%%%%%%%%%%%%%%%%%%%%%%%%%%%%%%%%%%%%%%%%%%%%%%%%%%%%%%%%%%%%%%%%
%%%%%%%%%%%%%%%%%%%%%%%%%%%%%%%%%%%%%%%%%%%%%%%%%%%%%%%%%%%%%%%%%%%%%%%%%%%%%%%%%%%%%%%%%

\section{Square-free class sizes}
\label{sec2}

We begin this section with the proof of Theorem \ref{teoD}.

\medskip

\begin{proof}[Proof of Theorem \ref{teoD}]
(1) Suppose that the result is false and let $G$ be a counterexample of least possible order. Since $G$ supersoluble,  $G/\fit{G}$ is abelian, and so $G' \leqslant \fit{G}$. Moreover, the quotients of $G$ inherit the hypotheses and the class of metabelian groups is a formation, so we may assume that there exists a unique minimal normal subgroup $N$ of $G$ with $\abs{N}=p$, for some prime divisor $p$ of $\abs{G}$. Hence, $\fit{G} = \op{O}_{p}(G) \leqslant P$, a Sylow $p$-subgroup of $G$. Since $G/\fit{G}$ is abelian, $P = \op{O}_{p}(G) = \fit{G}$. Hence $P = (P\cap A)(P\cap B)$, where $P\cap A$ and $P\cap B$ are Sylow $p$-subgroups of $A$ and $B$ respectively, by Lemma \ref{1.3.2}. Applying Theorem \ref{knoche}, we have $P' \leqslant \fra{P} \leqslant \ze{P}$, and $P'$ is elementary abelian of order at most $p^2$. Note that $P' \neq 1$, because $G' \leq P$.

By Lemma \ref{1.3.2}, we may consider $H$ a Hall $p'$-subgroup of $G$, such that $H = (H\cap A)(H\cap B)$, where $H\cap A$ and $H\cap B$ are Hall $p'$-subgroups of $A$ and $B$ respectively. Moreover, $H \cong G/\fit{G}$ is abelian. Let $x\in H\cap A$ be a prime power order element. Since $H\leqslant \ce{G}{x} \leqslant G$, it follows by the hypotheses that $\abs{x^{G}} \leq p$, and so $\fra{G}\leqslant \ce{G}{x}$. Thus $\fra{G}\leqslant \ce{G}{H\cap A}$, and analogously for $H\cap B$. Consequently we get $P' \leqslant \fra{P} \leqslant \fra{G} \leqslant \ce{G}{H}$. Since $P'\leqslant\fra{P}\leqslant\ze{P}$, it follows $P' \leqslant \fra{P} \leqslant\ze{G}$. In particular, $\fra{G} \neq 1 \neq \ze{G}$.

If $A, B\leqslant P$, then $G=P$ and $G' = P' \leqslant \ze{G}$, a contradiction. Hence we have either $H\cap A \neq 1$ or $H \cap B\neq 1$. Assume $H\cap A\neq 1$. Let $Q_A$ be a Sylow $q$-subgroup of $H\cap A$, for some prime $q \neq p$. Note that $Q_A \not \leqslant \ze{G}$, because $\ze{G}$ is a $p$-group. Let $\overline{Q_A}=Q_A\ze{G}/\ze{G}$, which acts on $\overline{P}= P/\ze{G}$, which is elementary abelian because $\fra{P}\leqslant\ze{G}\leqslant\fit{G}=P$. Suppose $\overline{w}=w\ze{G} \in \ce{\overline{Q_A}}{\overline{P}}$. Then $[\overline{w}, \overline{y}]=1$ for all $\overline{y}=y\ze{G} \in \overline{P}$, so $[w, y] \in \ze{G}\leqslant P$. Let  $k=o(w)$ denote the order of $w$. Thus $[w, y]^k = [w^k, y] = 1$. It follows $[w, y]=1$ for all $y\in P$, so $w\in \ce{Q_A}{P}=\ce{Q_A}{\fit{G}} =1$. Then $\ce{\overline{Q_A}}{\overline{P}}=1$ and the action is faithful. Let $1\neq \overline{\alpha_q}=\alpha_q\ze{G}\in\overline{Q_A}$. Therefore $\overline{P} = [\overline{P}, \overline{\alpha_q}] \times \ce{\overline{P}}{\overline{\alpha_q}}$, with $[\overline{P}, \overline{\alpha_q}] \neq 1$. Moreover, $\abs{[\overline{P}, \overline{\alpha_q}]} = \abs{\overline{P}:\ce{\overline{P}}{\overline{\alpha_q}}} = \abs{\overline{P}:\overline{\ce{P}{\alpha_q}}} = \abs{P:\ce{P}{\alpha_q}},$ which divides $\abs{\alpha_q^G}$, because $P$ is normal in $G$. Since $H \leqslant \ce{G}{\alpha_q}$ and $\alpha_q$ is a non-central prime power order element in $A$, it follows $\abs{\alpha_q^G}=p$, and so $\abs{[\overline{P}, \overline{\alpha_q}]} =p$. Applying Lemma \ref{lemabcl}, we get that $\overline{Q_A}\cong Q_A$ is cyclic. Since this is valid for each prime divisor $q$ of $\abs{H\cap A}$, we deduce that $H\cap A$ has cyclic Sylow subgroups, but it is abelian, so $H \cap A$ is cyclic. Analogously, if $H\cap B\neq 1$, then it is cyclic.

Let $H \cap A=\langle \alpha\rangle$. Assume first that $1\neq \alpha$ is a $q$-element, for some prime $q$, and that $G = P\langle\alpha\rangle$. By the above argument, $\langle\alpha\rangle\ze{G}/\ze{G}$ acts faithfully on $\overline{P}$, and $\abs{[\overline{P}, \overline{\alpha}]}=\abs{P:\ce{P}{\alpha}} = p$. Let $y \in P\setminus\ce{P}{\alpha}$. If $[\overline{y}, \overline{\alpha}] = 1$, then $[y, \alpha]\in\ze{G}\leqslant P$. Hence $[y, \alpha]^{o(\alpha)} = [y, \alpha^{o(\alpha)}] = 1$, so $[y, \alpha]=1$ and $y\in\ce{P}{\alpha}$, a contradiction. Then $[\overline{y}, \overline{\alpha}]\neq 1$, and since $\abs{[\overline{P}, \overline{\alpha}]} = p$, it follows $[\overline{P}, \overline{\alpha}] = \langle [\overline{y}, \overline{\alpha}]\rangle$. Therefore we have $\overline{[P, \alpha]} = [\overline{P}, \overline{\alpha}] = \langle[\overline{y}, \overline{\alpha}]\rangle = \overline{\langle[y, \alpha]\rangle},$ so $[P, \alpha] \leqslant [P, \alpha]\ze{G} = \langle[y, \alpha]\rangle\ze{G}$, and then $G' = P'[P, \alpha] \leqslant \langle[y, \alpha]\rangle\ze{G}$ which is abelian, a contradiction.

Hence, we may assume that, for every prime $q$, if $\alpha_q$ is the $q$-part of $\alpha$, then $P\langle\alpha_q\rangle < G$.  Note that $P\langle\alpha_q\rangle$ is normal in $G$, and  $P\langle\alpha_q\rangle = (P\langle\alpha_q\rangle \cap A)(P\langle\alpha_q\rangle\cap B)$. Therefore, by the minimality of $G$, it follows that $(P\langle\alpha_q\rangle)'$ is abelian. Notice that $(P\langle\alpha_q\rangle)' = P'[P, \alpha_q]$, since $P$ is normal in $G$.  Let  $K= P'[P, \alpha_q]$, which is an abelian normal subgroup of $G$,  and let $t\in[P,\alpha_q]\leqslant K$. Then Lemma \ref{isaac_lem} leads to $\abs{\ce{G}{\alpha_q}}<\abs{\ce{G}{[t, \alpha_q]}}$. If $p$ divides $\abs{{[t, \alpha_q]}^G}$ we get a contradiction, because $\abs{{[t, \alpha_q]}^G}<\abs{\alpha_q^G}=p$. Hence, $P\leqslant \ce{G}{[t, \alpha_q]}$ and $[t, \alpha_q]\in\ze{P}$, for each $t\in[P, \alpha_q]$. By coprime action, $P=[P, \alpha_q]\ce{P}{\alpha_q}$. Thus $[P, \alpha_q] = [[P, \alpha_q]\ce{P}{\alpha_q}, \alpha_q] = [P, \alpha_q, \alpha_q]$. If $k$ is a generator of $[P, \alpha_q, \alpha_q]$, then $k = [t, \alpha_q]\in\ze{P}$ with $t\in[P, \alpha_q]$, so $[P, \alpha_q] =[P , \alpha_q, \alpha_q]\leqslant\ze{P}$. Since this is valid for each prime divisor $q$ of the order of $H \cap A=\langle \alpha\rangle$, we get:  $$[P, H\cap A] = [P, \langle\alpha_{q_1}\rangle \times \cdots \times \langle\alpha_{q_t}\rangle] = [P, \alpha_{q_1}]\cdots[P, \alpha_{q_t}] \leqslant \ze{P}.$$ Analogously, if $H\cap B \neq 1$, then $[P, H\cap B] \leq \ze{P}$. Since $G' = P'[P, H]= P'[P, H\cap A][P, H\cap B]$, we get $G' \leq \ze{P}$. This final contradiction establishes statement (1).

(2) Suppose that the second assertion is not true and let $G$ be a counterexample of minimal order. We point out that the hypotheses are inherited by every quotient group of $G$ and, by (1), $G'$ is abelian. There exists a prime divisor $p$ of $\abs{G'}$ such that $G'$ does not have any elementary abelian Sylow $p$-subgroup. By the minimality of $G$, we may consider that $\op{O}_{p'}(G)=1$. Moreover, since $G$ is supersoluble, then $G/\fit{G}$ is abelian, and $\fit{G} = \op{O}_p(G)= P$ is a normal Sylow $p$-subgroup of $G$ such that $G' \leqslant P$. Using Lemma \ref{1.3.2} and Theorem \ref{knoche}, we obtain respectively that $P=(P \cap A)(P \cap B)$, and that $P'$ is elementary abelian with $P' \leqslant \fra{P} \leqslant \ze{P}$.

Let $G^\mathfrak{N}$ be the nilpotent residual of $G$. Note that $G^\mathfrak{N}\neq 1$; in other case, $G$ is a $p$-group and then $G'=P'$, a contradiction. Since $G^\mathfrak{N} \leqslant G'$, it follows that $G^\mathfrak{N}$ is abelian. By using \cite[III - 4.6, IV - 5.18]{DH}, we have that $G^\mathfrak{N}$ is complemented in $G$, and its complements are precisely the Carter subgroups of $G$. Accordingly, $G=G^\mathfrak{N}H$ with $H=\op{N}_{G}(H)$ a nilpotent subgroup of $G$ and $G^\mathfrak{N} \cap H=1$. These facts yield $G'=G^\mathfrak{N} \times (H \cap G')$, and $\op{C}_{G^\mathfrak{N}}(H)=1$. On the other hand, the minimality of $G$ implies that $G'/G^\mathfrak{N} \cong (H \cap G')$ is elementary abelian, and thus $G^\mathfrak{N}$ is not so.  If $\ce{H}{G^\mathfrak{N}} \neq 1$, since $\ce{H}{G^\mathfrak{N}}$ is normal in $G$, by the minimality of $G$ we have $(G/\ce{H}{G^\mathfrak{N}})'$ is an elementary abelian group, but 
$$
(G/\ce{H}{G^\mathfrak{N}})' = G'\ce{H}{G^\mathfrak{N}}/\ce{H}{G^\mathfrak{N}} =  G^\mathfrak{N}\ce{H}{G^\mathfrak{N}}/\ce{H}{G^\mathfrak{N}} \cong G^\mathfrak{N},
$$
which is a contradiction. Hence, $\ce{H}{G^\mathfrak{N}}=1$.  In particular, we deduce that  $\ze{G}=1$.  

By Lemma \ref{1.3.2}, there exists a Hall $p'$-subgroup $H_0$ of $G$ such that $H_0=(H_0 \cap A)(H_0 \cap B)$. Let $x \in H_0 \cap A$ be a non-trivial element of prime power order. Since $H_{0}\leqslant \ce{G}{x}$, it follows that $\abs{x^{G}} \leq p$, and so $\fra{G}\leqslant \ce{G}{x}$. Thus $\fra{G}\leqslant \ce{G}{H_{0}\cap A}$, and analogously for $H_{0}\cap B$. Consequently, we get $P' \leqslant \fra{P} \leqslant \fra{G} \leqslant \ce{G}{H_{0}}$. Since $P'\leqslant\fra{P}\leqslant\ze{P}$ and $G=PH_{0}$, it follows $P' \leqslant \fra{P} \leqslant \ze{G}=1$, which implies that $P$ is elementary abelian, the final contradiction. 

(3) Assume that the result is false and take $G$ a counterexample of minimal order. Consider  a prime $p$ such that $|\fit{G}'|_{p} > p^{2}$. By minimality, we can  affirm that  $\op{O}_{p'}(\fit{G})=1$.  Since $G$ is supersoluble, we get that  $\fit{G} = \op{O}_p(G)= P$ is a normal Sylow $p$-subgroup of $G$. Then, we apply both Lemma \ref{1.3.2} and  Theorem \ref{knoche} to get the final contradiction. 
\end{proof}

\begin{example}
Let $G =A \times B$ be the direct product of two symmetric groups of degree 3. Then $G$ is supersoluble, and every element contained in each factor (not only those of prime power order) has square-free conjugacy class size, but neither the derived subgroup $G'$ nor $G/\fit{G}$ are cyclic, in contrast to \cite[Theorem 2]{CW}. 
\end{example}

\begin{example}
In view of \cite[Theorem 2]{CW}, it is natural to wonder if we can affirm in the above result that the Sylow $p$-subgroups of $G'$ have order at most $p^2$. This fact is not further true, as we show:

Let $G=A \times B$, where $A = D_{14}$ is a dihedral group of order 14, and $B = D_{14} \times [C_7]C_3$ is the direct product of such a dihedral group and a semidirect product of a cyclic group of order 7 and a cyclic group of order 3 ($B$ has identification number $294\#9$ in the Small groups library of \verb+GAP+). Then $G$ is supersoluble, and satisfies that all prime power order elements contained in each factor have square-free conjugacy class size, but $G'$ has order $7^3$. 
\end{example}

Now we proceed with the proof of Theorem \ref{teoC}.
  
\medskip

\begin{proof}[Proof of Theorem \ref{teoC}]
Considering the smallest prime divisor of $\abs{G}$ and Theorem \ref{teoA}, we conclude that $G$ is soluble. Hence, it is $p$-soluble for each prime divisor $p$ of $\abs{G}$. Applying Theorem \ref{teoB}, we get that $G$ is $p$-supersoluble for each prime that divides $\abs{G}$, so it is supersoluble.

Now we prove the second assertion by induction on $\abs{G}$. Let $p$ be an arbitrary prime, and $P$ be a Sylow $p$-subgroup of $G$. We want to show that $P\op{F}(G)/\op{F}(G)\cong P/\op{O}_{p}(G)$ is elementary abelian. Since $G$ is supersoluble, we have that $G/\op{F}(G)$ is abelian. Moreover, we may assume by induction that $\op{O}_{p}(G)=1$. Therefore, we have that $\op{F}(G) \leqslant H \leqslant G,$ where $H$ is a Hall $p'$-subgroup of $G$. Consequently, $H$ is normal in $G$ and $G$ is $p$-nilpotent. Finally, by Theorem \ref{teoElementary} the result is established.
\end{proof}

\medskip

When considering in the above theorem all $p$-regular elements in the factors, we get as a corollary:

\begin{corollary}\emph{(\cite[Corollary 1.5]{BCL})}
\label{cor1.5BCL}
Let the group $G = AB$ be the mutually permutable product of the subgroups $A$ and $B$. Suppose that for every prime $p$ and every $p$-regular element $x \in A \cup B$, $\abs{x^G}$ is not divisible by $p^2$. Then $G$ is supersoluble.
\end{corollary}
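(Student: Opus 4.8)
The plan is to observe that this corollary is an immediate consequence of Theorem \ref{teoC}, obtained simply by weakening the hypothesis there to a special case of the present one. Indeed, every prime power order $p$-regular element of $A \cup B$ is in particular a $p$-regular element of $A \cup B$, so the assumption made here — that $\abs{x^G}$ is not divisible by $p^2$ for every prime $p$ and \emph{every} $p$-regular element $x \in A \cup B$ — entails the assumption of Theorem \ref{teoC}, which only concerns the prime power order $p$-regular elements of $A \cup B$. Applying Theorem \ref{teoC} then yields at once that $G$ is supersoluble.

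I expect no genuine obstacle in this argument: there is no induction, no reduction to a minimal counterexample, and no new structural ingredient required beyond Theorem \ref{teoC}. The single point to verify is the trivial containment of the two relevant families of elements of $A \cup B$, which is precisely why the statement is phrased as a corollary rather than proved from scratch.

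For completeness one might also note that, under these stronger hypotheses, Theorem \ref{teoE} supplies the extra information that a Sylow $p$-subgroup of $G/\op{F}(G)$ has order at most $p^2$ for each prime $p$; but the bare supersolubility asserted here rests only on the weaker Theorem \ref{teoC}, and we keep the proof to that single invocation.
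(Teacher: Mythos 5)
Your proposal is correct and matches the paper's approach exactly: the paper presents this corollary immediately after Theorem \ref{teoC} with the remark that it follows by noting that the hypothesis on all $p$-regular elements of $A\cup B$ is stronger than (hence implies) the hypothesis of Theorem \ref{teoC}, which concerns only those of prime power order. Nothing further is needed.
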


\begin{example}
Consider $G=A\times B$, where $A= \op{Sym}(3)$ is a symmetric group of degree 3, and $B = \op{Sym}(3) \times D_{10}$ is the direct product of such a symmetric group and a dihedral group of order 10. Then $G$ satisfies the hypotheses of  Theorem \ref{teoC}. However there exists some $2$-regular element in $B$, not of prime power order, such that  $4$ divides its conjugacy class size, so Corollary \ref{cor1.5BCL} cannot be applied.
\end{example}

In the particular case when $A$ and $B$ are normal in $G$, we obtain \cite[Proposition 9]{LWW}.

\begin{corollary}
Let $A$ and $B$ be normal subgroups of $G$ such that $G=AB$. Suppose that $\abs{x^G}$ is square-free for every element $x$ of prime power order of $A\cup B$. Then $G$ is supersoluble.
\end{corollary}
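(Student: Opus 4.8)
The plan is to obtain this statement as an immediate specialisation of Theorem \ref{teoC}. First I would note that normal subgroups are always mutually permutable: if $A$ and $B$ are normal in $G$, then $A$ permutes with every subgroup of $G$, and \emph{a fortiori} with every subgroup of $B$, and likewise $B$ permutes with every subgroup of $A$ — exactly as recalled in the Introduction. Hence $G=AB$ is a mutually permutable product of the subgroups $A$ and $B$, and Theorem \ref{teoC} is applicable in principle.

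Next I would translate the present hypothesis into the form required by that theorem. Let $p$ be an arbitrary prime and let $x\in A\cup B$ be a $p$-regular element of prime power order. By assumption $\abs{x^{G}}$ is square-free, so it is divisible by the square of no prime; in particular $p^{2}$ does not divide $\abs{x^{G}}$. Thus the class-size hypothesis of Theorem \ref{teoC} holds for every prime $p$, and applying that theorem we conclude that $G$ is supersoluble. (One even obtains, as part of Theorem \ref{teoC}, that $G/\op{F}(G)$ has elementary abelian Sylow subgroups, although only supersolubility is asserted here.) In particular this recovers \cite[Proposition 9]{LWW}.

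I do not expect any genuine obstacle here: the corollary is weaker than Theorem \ref{teoC} in two independent ways — it strengthens ``mutually permutable'' to ``both factors normal'', and it strengthens ``$p^{2}\nmid\abs{x^{G}}$ for the relevant $p$'' to ``$\abs{x^{G}}$ square-free''. The only point deserving a line of verification is that restricting attention to $p$-regular elements of prime power order loses nothing, which is clear since the hypothesis is imposed on \emph{all} elements of prime power order in $A\cup B$.
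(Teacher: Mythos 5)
Your proof is correct and is exactly the route the paper intends: the corollary is stated as an immediate consequence of Theorem \ref{teoC}, since normal factors are mutually permutable and the square-free hypothesis on all prime power order elements of $A\cup B$ subsumes the condition ``$p^{2}\nmid\abs{x^{G}}$ for every prime $p$ and every $p$-regular prime power order element''. Your extra remark that restricting to $p$-regular elements loses nothing is the only point needing verification, and you handle it correctly.
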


This development has its origins in the contributions of Chillag and Herzog \cite[Theorem 1]{CH}, and  Cossey and Wang \cite[Theorem 2]{CW}. Our next result Theorem \ref{teoE} and Theorem \ref{teoD} can be considered somehow extensions of the ones above for (mutually permutable) products. In fact, Theorem \ref{teoE} provides further information on the Sylow subgroups of $G/\fit{G}$.

\medskip

\begin{proof}[Proof of Theorem \ref{teoE}]
Suppose the result is not true and let $G$ be a counterexample of least order possible. Then if $P$ is a Sylow $p$-subgroup of $G$, we have $\abs{P/\op{O}_p(G)}\geq p^3$. We can assume by the minimality of $G$ that $\op{O}_{p}(G)=1=\fra{G} = \ze{G}$, so $\abs{P}\geq p^3$. By Lemma \ref{1.3.2} we can choose $P = (P\cap A)(P\cap B)$, with $P\cap A$ and $P\cap B$ Sylow $p$-subgroups of $A$ and $B$ respectively. By Theorem \ref{teoC}, we have that $G$ is supersoluble and $G/\op{F}(G)$ has elementary abelian Sylow subgroups. In particular, $P\op{F}(G)$ is normal in $G$. Hence Lemma \ref{mutperm} (b) asserts that $L = (P\op{F}(G)\cap A)(P\op{F}(G)\cap B)$ is normal in $G$, and it is a mutually permutable product. Moreover, $P = (P\cap A)(P\cap B)\leqslant (P\op{F}(G)\cap A)(P\op{F}(G)\cap B) = L \leqslant P\fit{G}.$ If we suppose $L < G$, by the minimality of $G$ it follows $\abs{P/\op{O}_{p}(L)} = \abs{P}\leq p^2$, a contradiction. Thus, we may assume $L=G=P\op{F}(G)$, and so $G$ is $p$-nilpotent. 

Let $N$ be a minimal normal subgroup of $G$. We can assume without loss of generality that it is contained in $A$ by Theorem \ref{prodnotrivial}. Note $\abs{N}=q\neq p$. By Proposition \ref{proptec} (1), it follows $\abs{\op{O}_{p}(G/N)}\leq p$, and by the minimality of $G$ we have $\abs{(PN/N)/\op{O}_{p}(G/N)}\leq p^2.$ Since $P\cong PN/N$, we may assume $\abs{P}=p^3$. As $\fra{G}=1$, [\cite{DH}, A - 10.6 Theorem] leads to $\fit{G}=\op{Soc}(G)$. If $N$ is the unique minimal normal subgroup of $G$, then $P \cong \op{N}_{P}(N)/\op{C}_{P}(N)$ which is isomorphic to a subgroup of $\op{Aut}(C_q) \cong C_{q-1},$ so $P$ is cyclic and elementary abelian, which implies that its order is $p$, a contradiction.

Now we denote by $T$ the product of all minimal normal subgroups of $G$ distinct of $N$, so $T \neq 1$ and $T\cap N =1$. It follows $\op{F}(G) = \op{Soc}(G) = N \times T$. We denote $Q_1= \op{O}_{p}(PN)$ and $Q_2= \op{O}_{p}(PT)$. Since $PN\cong G/T$ (and $PT\cong G/N$), by the minimality of $G$ we may affirm $Q_1 \neq 1 \neq Q_2$. On the other hand, since $[\op{O}_{p}(PN), N] \leqslant \op{O}_{p}(PN)\cap N = 1$, we have $Q_1\leqslant C_1= \op{C}_{P}(N)$ (analogously $Q_2 \leqslant C_2= \op{C}_{P}(T)$). In addition, it follows $C_1 \cap C_2 \leqslant \op{C}_{P}(\op{F}(G)) \leqslant \op{C}_{G}(\op{F}(G)) \leqslant \op{F}(G), $ so $C_1 \cap C_2 =1$. Let $P_0$ be a Sylow $p$-subgroup of $\op{C}_{G}(N)$ such that $P_0 \leqslant P$. Hence $P_0=C_1=\op{C}_{P}(N)$. In addition, since $N = \langle x\rangle$ where $x$ is a $q$-element contained in $A$, by the hypotheses it follows that $p^2$ does not divide $\abs{x^G}_{p}=\abs{G:\op{C}_{G}(x)}_{p} = \abs{P:C_1}$. Moreover, since $\abs{P} = \abs{P:C_{1}}\cdot\abs{C_{1}} = p^3$, we may assume $\abs{C_1}\geq p^2$, and since $1 \neq Q_2 \leqslant C_2$, we have $\abs{C_2}\geq p$. Accordingly $\abs{C_1 C_2} = \abs{C_1}\cdot\abs{C_2} \geq p^3$, and since $P$ is abelian, we have necessarily $P=C_1\times C_2$. This leads to $$ G = P\op{F}(G) = C_1 C_2 N T = (C_1 T) \times (C_2 N). $$ 

Suppose $T\cap A\neq 1$, and let $1\neq y \in T\cap A \leqslant C_1T\cap A$. Let $1\neq x\in N \leqslant A$. Then since $xy \in \fit{G}\cap A$ and $\fit{G}$ is abelian, we have that $xy$ is a $p$-regular element, so by the hypotheses $p^2$ does not divide $\abs{(xy)^G}$. As $G$ is a direct product, we have $\abs{(xy)^G} = \abs{x^G}\abs{y^G}$. In addition, $(N\times C_1T)\leqslant \ce{G}{x}\leqslant G$, and therefore $\abs{x^G}$ divides $\abs{G:(C_1T\times N)} = \abs{C_2}$ which is a $p$-number, so $\abs{x^G}=p$ (recall that $\ze{G}=1$), and analogously $\abs{y^G} = p$, a contradiction. We conclude $T\cap A = 1$. 

If $\op{F}(G) = K \times M$ with $M$ normal in $G$ and $K$ a minimal normal subgroup of $G$ contained in $B$, by similar arguments we can deduce $M \cap B=1$. This means, in particular, that neither $A$ nor $B$ can contain two distinct minimal normal subgroups of $G$.

On the other hand, since $\op{F}(G)$ is the unique $p'$-Hall subgroup of $G$, Lemma \ref{1.3.2} leads to $\op{F}(G) = (\op{F}(G) \cap A)(\op{F}(G)\cap B).$ Moreover, since $\op{F}(G) \cap A = N T \cap A = N(T\cap A) = N$, it follows $\op{F}(G) = N(\op{F}(G)\cap B)$. Note that $(\op{F}(G) \cap B) \cap N \leqslant N$ with $\abs{N}=q$ so we distinguish two cases: either $(\op{F}(G) \cap B) \cap N = N$ or $(\op{F}(G) \cap B) \cap N = 1$. In the first case $\op{F}(G) = \op{F}(G) \cap B\leqslant B$. Thus there exists another minimal normal subgroup contained in $B$ and distinct of $N$, a contradiction. Hence we conclude $(\op{F}(G) \cap B) \cap N = 1$ so $\op{F}(G) = N \times (\op{F}(G) \cap B)$. 

Now suppose that $\op{F}(G)$ is a $q$-group. Then, since $\op{F}(G) = \op{Soc}(G)$, it follows that $\op{F}(G)$ is $q$-elementary abelian. In addition, $P\cap A$ acts faithfully over $\op{F}(G)$. Let $1\neq x\in P\cap A \leqslant A$, then $\op{F}(G) = [\op{F}(G), x] \times \op{C}_{\op{F}(G)}(x)$, with $\op{C}_{\op{F}(G)}(x) < \op{F}(G)$ since $\op{Z}(G) = 1$ and $P$ is abelian. By the hypotheses, $q^2$ does not divide $\abs{x^{G}}$, and therefore it does not divide $\abs{\op{F}(G):\op{C}_{\op{F}(G)}(x)}$. Thus we may affirm $\abs{[\op{F}(G), x]} =q$. By Lemma \ref{lemabcl} we conclude that $P \cap A$ is cyclic, and analogously $P \cap B$ is cyclic too. So they are both cyclic and elementary abelian, that is, they both have order $p$. Thus $\abs{P} = \abs{(P\cap A)(P\cap B)}\leq p^2,$ a contradiction. 

Hence we may suppose that there exists a prime $r\neq q$ such that $r$ divides $\abs{\op{F}(G)}$. Let $1\neq R$ be a Sylow $r$-subgroup of $\op{F}(G) \cap B$ (so it is a Sylow $r$-subgroup of $\op{F}(G)$). Then $1\neq R=\op{O}_{r}(G) \leqslant B$, and since $\op{F}(G) = \op{Soc}(G)$, necessarily we have that $\op{O}_{r}(G)$ is the product of the minimal normal subgroups of $G$ with order $r$. Let $M \leqslant B$ be one of those minimal normal subgroups. Arguing exactly in the same way as with $N$, it follows $\op{F}(G) = M \times (A\cap \op{F}(G))$. But $A\cap \op{F}(G) = N$ so $\op{F}(G) = N \times M$ with both minimal normal subgroups of $G$, $N\leqslant A$ and $M\leqslant B$. Let $P_1$ be a Sylow $p$-subgroup of $\op{C}_{G}(N) = \op{C}_{G}(x)$ such that $P_1\leqslant P$. Then by the hypotheses we have $\abs{x^G}_{p}=\abs{P:P_1} \leq p$. Since $\abs{P}=p^3$, it follows $\abs{P_1}\geq p^2$. However, $P_1$ is normal in $PN$ so $P_1\leqslant \op{O}_{p}(PN) \cong \op{O}_{p}(G/M)$, and by Proposition \ref{proptec} (1) we have $\abs{\op{O}_{p}(G/M)} \leq p$. This final contradiction establishes the theorem.
\end{proof}

\begin{example}
\label{exampleG/FittingPrimePower}
Under the hypotheses of Theorem \ref{teoC} (even under those of Theorem \ref{teoD}), it is not possible to assure that $G/\op{F}(G)$ has Sylow $p$-subgroups of order at most $p^2$, as the following example shows:

Let $\lbrace p_1, p_2, \ldots, p_n\rbrace$ be a finite set of pairwise distinct odd primes, and let $G = D_{2p_1}\times D_{2p_2} \times \cdots \times D_{2p_n}$ be the direct product of dihedral groups of order $2p_i$, $1\leq i \leq n$. Then $G=A \times B$ is a mutually permutable product of $A=D_{2p_1}$ and $B=D_{2p_2} \times \cdots \times D_{2p_n}$, and each prime power order element contained in the direct factors has square-free conjugacy class size. However, $G/\fit{G}$ has order $2^n$.
\end{example}

\end{document}